\DeclareMathAlphabet{\mathcal}{OMS}{cmsy}{m}{n}
\def\CC{\mathbb{C}}
\def\RR{\mathbb{R}}
\def\NN{\mathcal{N}}
\def\ZZ{\mathbb{Z}}
\def\aa{\mathbf{a}}
\def\bb{\mathbf{b}}
\def\ad{\mathrm{ad}}
\def\coad{\mathrm{coad}}
\def\dd{\mathrm{d}}
\def\ee{\mathrm{e}}
\def\ii{\mathrm{i}}
\def\M{\mathcal{M}}
\def\gg{\mathfrak{g}}
\def\ssl{\mathfrak{sl}}
\def\so{\mathfrak{so}}
\def\se{\mathfrak{se}}
\def\D{\mathbf{D}}
\def\J{\mathbf{J}}
\def\SE{\mathsf{SE}}
\def\SL{\mathsf{SL}}
\def\SO{\mathsf{SO}}
\def\OO{\mathsf{O}}
\def\half{{\textstyle\frac12}}
\def\quarter{{\textstyle\frac14}}
\def\tfrac#1#2{{\textstyle\frac{#1}{#2}}}
\newtheorem{theorem}{Theorem} 
\newtheorem{lemma}[theorem]{Lemma}
\newtheorem{proposition}[theorem]{Proposition}
\newtheorem{corollary}[theorem]{Corollary}
\newtheorem{conjecture}[theorem]{Conjecture}
\theoremstyle{definition}
\newtheorem{remark}[theorem]{Remark}
\begin{document}

\title{Deformation of geometry \\ and bifurcations of vortex rings}

\author{James Montaldi\footnote{School of Mathematics, University of Manchester, Oxford Rd, Manchester  M13 9PL, England. \texttt{j.montaldi@manchester.ac.uk}} \ \& 
Tadashi Tokieda\footnote{
Trinity Hall, Cambridge CB2 1TJ, England.  \texttt{t.tokieda@dpmms.cam.ac.uk}}
}

\maketitle

\begin{abstract}
We construct a smooth family of Hamiltonian systems, together with a family of group symmetries and momentum maps, for the dynamics of point vortices on surfaces parametrized by the curvature of the surface.  Equivariant bifurcations in this family are characterized, whence the stability of the Thomson heptagon is deduced without recourse to the Birkhoff normal form, which has hitherto been a necessary tool. 
\end{abstract}

\section*{Introduction}

   The present paper introduces one geometric idea and implements it in one dynamical problem.  

Here is the problem.  On the Euclidean plane, a ring of identical point vortices shaped as a regular $n$-gon is a relative equilibrium, in that it spins while keeping the same shape.  Is this solution stable if we perturb the initial shape away from the regular $n$-gon?  When $n < 7$, 
linear stability analysis (first carried out by Thomson \cite{Thomson}) concludes that the solution is stable; when $n > 7$, it is likewise unstable.  But when $n = 7$, degeneracy makes linear analysis inapplicable and prevents us from concluding.  Is the regular $7$-gon stable or unstable?  This is known as the {\it Thomson heptagon\/} problem.    It has been answered in the affirmative, see  \cite{Mertz,KY02,Schmidt04}, although as pointed out in \cite{KY02} the argument in \cite{Mertz} is incomplete. Indeed part of our approach could be viewed as completing the argument of \cite{Mertz}.
   
   The spirit of the approach goes back to Poincar\'e.  In the theory of dynamical systems, the simplest solution methods to problems require some nondegeneracy condition (nonzero determinant, nonresonant frequencies, $\ldots$).  When the problem, call it $\cal P$, is degenerate, we have to mobilize heavy machinery (cf.\ \cite{Schmidt04} for a proof with recourse to the Birkhoff normal form that the Thomson heptagon is nonlinearly stable, as well as for historical details).  But there is an alternative approach.  Embed ${\cal P} = {\cal P}_0$ in a parametric family ${\cal P}_{\lambda}$ of problems and deform it away from degeneracy.  The problem $\cal P$ can become tractable when regarded as $\lim_{\lambda \to 0} {\cal P}_{\lambda}$, if we happen to understand well enough the bifurcations that occur in such a deformation.  Thus, this approach trades one machinery for another, of bifurcation theory.  The point is that the latter sometimes sheds an 
unusual light on the problem compared to the former.
   
In the classic applications of this idea, people deform the dynamical system by adding a perturbation term in $\lambda$.  This, however, we cannot do in our special instance of the heptagon problem if we wish to preserve the hydrodynamic motivation: as long as we are studying the dynamics on the plane, it makes little physical sense to tamper with the Hamiltonian.     

   We therefore deform not so much the dynamical system {\it but rather the phase space\/} on which the system evolves.  A deformed choice of the phase space fixes canonically, by the hydrodynamic motivation, a deformed Hamiltonian formalism.  Explicitly, we take $\lambda$ to be the Gaussian curvature\footnote{Actually formulaic convenience leads us to take $4\lambda$ to be the Gaussian curvature.} and deform the original plane to $\lambda > 0$ (family of spheres) and to $\lambda < 0$
 (family of hyperbolic planes).  Corresponding to this family of surfaces parametrized by $\lambda$, we must write a whole parametric family of Hamiltonian systems for point vortices: a family of symplectic (K\"ahler) forms depending on $\lambda$, a family of symmetry groups and momentum maps depending on $\lambda$, a family of invariant Hamiltonians depending on $\lambda$---all dependences arranged to be smooth.  We do this in section 1.  In section 2 we carry out the stability analysis for the parametric family.  Bifurcations are characterized, and the nonlinear stability of the heptagon is deduced, in section 3.
 
    The idea of {\it deforming the geometry\/} underlying the dynamics of point vortices, in particular as a route to a better understanding of the Thomson heptagon problem, arose during an evening conversation between the two authors in Peyresq, in the summer of 2003.  We have since discussed it in seminars and conferences, and part of it has leaked into the literature \cite{Boatto}.  We set down the full story in the present paper. The stability of the Thomson heptagon is stated below as Corollary \ref{coroll:Thomson}.

\section{Smooth family of geometries}

In this section we describe the family of surfaces of constant curvature, containing the hyperbolic planes ($\lambda<0$), the Euclidean plane ($\lambda=0$), and the spheres ($\lambda>0$).  Each is a homogenous space, i.e.\ an orbit of its group of symmetries, and for the different signs of $\lambda$ we describe the different groups.  We begin with the 1-parameter family of Lie algebras $\gg_\lambda$ (rather than groups) in section \ref{sec:Lie algebra}, and in order to obtain the corresponding family of surfaces $\M_\lambda$, we shift the usual linear action of these Lie algebras to obtain affine linear actions.  In \ref{sec:surfaces} the geometry of $\M_\lambda$ is described; among other things it has curvature $4\lambda$.  The Hamiltonian for point vortices on  $\M_\lambda$ is based on Green's function for the Laplacian on $\M_\lambda$, and we meet three possible choices of Green's function according to choices of the `boundary condition'.  In \ref{sec:vorticity} we comment on the implications of the different choices of Green's function.

\subsection{Lie algebras} \label{sec:Lie algebra}

Let $\lambda \in \RR$.  On $\RR^3$ with coordinates $(x,y,u)$, consider the family of metrics
\begin{equation} \label{eq:family of metrics}
\dd s^2 = \dd x^2 + \dd y^2 + \lambda\,\dd u^2.
\end{equation}
The Lie group of linear transformations preserving
this metric will be denoted $\SO(q_\lambda)$, where $q_\lambda =
\mathrm{diag}[1,1,\lambda]$ is the metric tensor.   For the Lie algebra, we have $X\in
\so(q_\lambda)$ if and only if $X^Tq_\lambda + q_\lambda X=0$. A
basis for $\so(q_\lambda)$ is
\begin{equation}\label{eq:coadjoint rep}
X_1 =
\pmatrix{0&0&0\cr 0&0&-\lambda\cr 0&1&0},\quad
  X_2 = \pmatrix{0&0&\lambda\cr 0&0&0\cr -1&0&0},\quad
  X_3 = \pmatrix{0&-1&0\cr 1&0&0\cr 0&0&0}.
\end{equation}
(Strictly speaking, the third column of $X_i$ is
arbitrary when $\lambda=0$, meaning the family of all automorphisms of
(\ref{eq:family of metrics}) is not \emph{flat}.  We are picking a
component which \emph{is} a flat family over $\lambda$.) This basis satisfies the commutation relations
$$
[X_1,\, X_2] = \lambda X_3,\quad [X_2,\, X_3]=X_1,\quad [X_3,\, X_1]
= X_2.
$$
From now on, we shall abbreviate $G_\lambda = \SO(q_\lambda)$ and
$\gg_\lambda=\so(q_\lambda)$.

The Lie algebra $\gg_\lambda$ is isomorphic to
$\so(3)$ for $\lambda>0$, to $\se(2)$ for $\lambda=0$, and to $\ssl(2)$ for $\lambda<0$.
Indeed, for $\lambda\neq0$, the standard commutation relations are
recovered by rescaling the basis to $\{|\lambda|^{-1/2}X_1,\,
|\lambda|^{-1/2}X_2,\, X_3\}$.  So
$$
G_\lambda\simeq \cases{\SO(3) & if $\lambda>0$\cr
\SE(2) & if $\lambda=0$\cr
\SL(2,\RR) & if $\lambda<0$.}
$$

It is seen from the commutation relations that in
the adjoint representation of $\gg_\lambda$ the basis elements are
represented by $\ad_{X_j} = -X_j^T$; in other words, if
$\sum_ja_jX_j\in\gg_\lambda$ is written as a vector
$\mathbf{u}=(a_1\;a_2\;a_3)^T$,  then $\ad_{X_j}(\mathbf{u}) = -X_j^T\mathbf{u}$.  
In the \emph{coadjoint\/} representation the basis elements
$X_j\in\gg_\lambda $ are represented by the matrices $X_j$
themselves. Thus the original $\RR^3$ from which we started may be
naturally identified with $\gg_\lambda^*$.

\paragraph{Affine action}
Whereas the coadjoint action defined by the matrices $X_j$ depends
continuously on $\lambda$, it is not possible to track a single
orbit  continuously as $\lambda$ crosses 0.  Yet, in what follows, we wish to do
just that.  To this end we must shift the linear coadjoint action by a translation, making it an affine action.

The affine action of the Lie algebra is given by
\begin{equation}\label{eq:infinitesimal action}
X\cdot\mu  = X\mu+\tau(X),
\end{equation}
where $X\mu$ is the linear action part (matrix times vector) and
$$
\tau(aX_1+bX_2+cX_3) = \pmatrix{-b/2\cr a/2\cr0}
$$
is the translation.  The orbit we track is the one through the origin,  
cf.\ section \ref{sec:surfaces}.

\begin{remark}
Our translation $\tau$, a function of the element of
$\gg_{\lambda}$, is a 1-cocycle taking values in $\gg_{\lambda}^*$,
a {\it symplectic cocycle\/} in Souriau's terminology \cite{Souriau} because the matrix of $\tau$ is
skew-symmetric.  It is known that every cocycle is exact when the group is semi-simple, and our $G_{\lambda}$ is semi-simple for $\lambda\neq0$.
Here
$\tau=\delta((0,\,0,\, 1/2\lambda)^T)$, since by definition
$\delta(\mu)(X) = -\coad_X\mu = -X\mu$.
The natural invariant Poisson structure on $\RR^3=\gg_\lambda^*$
with the cocycle $\tau$ is given by (cf.\ \cite{MaRa,Souriau})
\begin{equation}\label{eq:PoissonBracket}
\{f,\,g\}(\mu) = \left<\mu,\; [ \dd f(\mu), \dd g(\mu) ]\right> -
\left<\tau(\dd f(\mu)), \dd g(\mu)\right>
\end{equation}
under the identification $\dd f(\mu), \dd g(\mu)\in(\gg_\lambda^*)^*\simeq\gg_\lambda$.  The Casimir for this Poisson structure is $x^2 + y^2 + \lambda u^2 - u$, so level sets of this function are the orbits of the shifted coadjoint action (\ref{eq:infinitesimal action}), of which we shall take advantage below.  For the record, the Kostant-Kirillov-Souriau symplectic form on the affine coadjoint orbits is given by the same formula,
$$
\Omega_\mu(\mathbf{u},\mathbf{v}) = \left<\mu,\,[\xi,\eta]\right>
  - \left<\tau(\xi),\eta\right>,
$$
where $\mathbf{u}=\coad_\xi\mu$ and $\mathbf{v}=\coad_\eta\mu$.
\end{remark}

\subsection{Surfaces}\label{sec:surfaces}

Now consider the family of quadratic surfaces through the
origin in $\RR^3$,
\begin{equation} \label{eq:family of surfaces}
x^2 + y^2 + \lambda u^2 - u = 0.
\end{equation}
When $\lambda>0$, this looks like an ellipsoid with centre at $(x,y,u) =
(0,0,1/2\lambda)$. With the metric (\ref{eq:family of metrics}), however, this ellipsoid-looking surface is in fact a sphere of radius $1/2\sqrt\lambda$.  Its Gaussian curvature is $4\lambda$.

When $\lambda=0$,  (\ref{eq:family of surfaces}) defines
the paraboloid $u=x^2+y^2$, and the metric is the usual metric on
the $xy$-plane lifted to the paraboloid by orthogonal
projection, so is of curvature 0.

When $\lambda<0$, the metric (\ref{eq:family of metrics}) becomes
Lorentzian, but restricted to either sheet of the 2-sheeted
hyperboloid defined by (\ref{eq:family of surfaces}) it induces the
hyperbolic metric of constant negative curvature $4\lambda$; we
consider just the `upper sheet' that passes through the origin, see Figure \ref{fig:family of surfaces}.

We refer to the surface (\ref{eq:family of surfaces}) with metric induced from (\ref{eq:family of metrics}) 
as $\M_\lambda$.  It is easy to check that $\M_\lambda$ is invariant under the infinitesimal action (\ref{eq:infinitesimal action}), and is therefore an orbit of the affine coadjoint $G_\lambda$-action on $\gg_\lambda^*$.

To create a {\it uniform coordinate system\/} on $\M_\lambda$, we
use stereographic projection on the $xy$-plane, centered at
the point $(0,0, 1/\lambda)$ (where  $\M_\lambda$ intersects the $u$-axis, besides the origin); for $\lambda=0$
this is the orthogonal projection. We also identify the $xy$-plane with $\CC$, via $z = x+\ii y$. The map inverse to the projection has the formula
\begin{equation} \label{eq:stereographic}
 z \mapsto
\pmatrix{x+\ii y\cr u} = \frac{1}{1+\lambda|z|^2}\pmatrix{z\cr
|z|^2}.
\end{equation}
The domain of this map is 
$\{z\in\CC \mid 1+\lambda|z|^2>0\}$, which is the entire plane if $\lambda\geqslant 0$ and
a bounded disc (Poincar\'e disc) if $\lambda<0$. For the sphere,
the equator corresponds to $|z|^2 = 1/\lambda$, while the point antipodal to $z$ is 
$- 1/\lambda\bar z$.

\begin{figure}[t]
 \psset{unit=1.4}
\begin{center}
\fbox{\begin{pspicture}(-2,-1.2)(2,2.8)
 \psline[linecolor=gray](-2,-0.5)(1,-0.5)(1.8,0.5)(-1,0.5)(-2,-0.5)
 \psellipse[fillstyle=solid,fillcolor=white](0,1.2)(1,1.2)
 \parametricplot[linestyle=dashed]{0}{180}{t cos 1.2 t sin 0.2 mul add}
 \parametricplot{0}{180}{t cos 1.2 t sin -0.2 mul add}
 \psline[linewidth=0.2pt]{->}(0,-0.2)(0,2.8)
 \psline[linewidth=0.5pt,linestyle=dotted](0,2.4)(-0.33,0.6)\rput(-0.33,0.6){$\times$}
 \psline[linewidth=0.2pt,arrowsize=.1]{->}(-0.33,0.6)(-0.5,-0.3)
 \psdot(0,0) 
 \psdot[linecolor=gray](0,2.4) 
 \rput(0,-1){$\lambda>0$ (sphere)}
\end{pspicture}}
\fbox{\begin{pspicture}(-2,-1.7)(2,2.3)
 \psline[linecolor=gray](-2,-0.55)(1,-0.55)(1.8,0.45)(-1,0.45)(-2,-0.55)
 \psline[linewidth=0.2pt](0,-1.2)(0,0) 
 \psplot[fillstyle=solid,fillcolor=white]{-1.7}{1.7}{x 2 exp 2 mul 0.25 add sqrt 2 div 0.25 sub}
 \psellipse[linewidth=0.5pt](0,1)(1.7,0.3)
 \psline[linewidth=0.2pt]{->}(0,0.75)(0,2) 
 \psplot[linestyle=dashed,linewidth=0.2pt]{-1}{1}{0.25 x 2 exp 2 mul 0.25 add sqrt 2 div add -1 mul}
 \psellipse[linewidth=0.2pt,linestyle=dashed](0,-0.8)(0.7,0.1)
 \psline[linewidth=0.2pt,arrowsize=.1]{->}(-0.9,0.6)(-0.27,-0.17)\rput(-0.9,0.6){$\times$}
 \psline[linewidth=0.5pt,linestyle=dotted](-0.27,-0.17)(0,-0.5)
 \psdot[linecolor=black](0,0) 
 \psdot[linecolor=gray](0,-0.5) 
 \rput(0,-1.5){$\lambda<0$ (hyperboloid)}
\end{pspicture}}
 \caption{Geometries in the 1-parameter family}
 \label{fig:family of surfaces}
\end{center}
\end{figure}
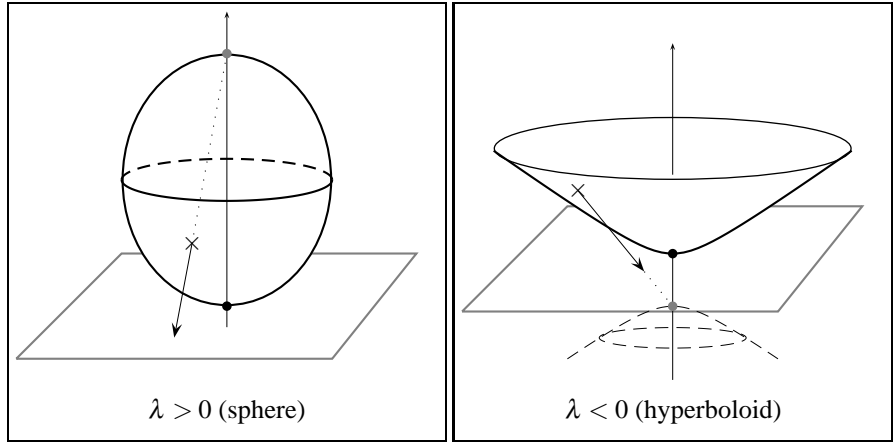

The metric on the surface $\M_\lambda$ induced from that in (\ref{eq:family of
metrics}), in terms of the complex variable $z$, is 
$$\dd s^2 = \frac{1}{\sigma^2}|\dd z|^2,$$
where $\sigma = 1+\lambda|z|^2$, a notation we shall use throughout.  
The circle $|z|=r$ in $\CC$ maps to a circle of radius $a$ on $\M_\lambda$, where\footnote{Despite three formulae, $a$ is a single analytic function of  $r, \lambda$, with series expansion $a=r-\frac13r^3\lambda+\frac15r^5\lambda^2-\frac17r^7\lambda^3+\cdots$ convergent for $|r^2\lambda|<1$.}
\begin{equation}\label{eq:radius}
a=\cases{\frac{1}{\sqrt{\lambda}}\tan^{-1}\left(r\sqrt\lambda\right) & if $\lambda>0$\cr
 r & if $\lambda=0$\cr
 \frac{1}{\sqrt{- \lambda}}\tanh^{-1}\left(r\sqrt{-\lambda}\right)& if $\lambda<0$.}
\end{equation}

Pulling back the vector fields $X_j$ of the Lie algebra (or rather
their affine variants shifted by $\tau$) via the
stereographic projection yields
$$\xi_1(x,y) = \half\left(2\lambda xy,\; 1-\lambda(x^2-y^2)\right),\quad
  \xi_2(x,y) = -\half\left(1+\lambda(x^2-y^2),\; 2\lambda xy\right),$$
$$  \xi_3(x,y) = (-y, x),
$$
or in complex variables
$$
\xi_1+\ii\xi_2 = - \ii \partial_{\bar z}
-\ii\lambda\,z^2\partial_z, \quad \xi_3 = \ii\left(z\,\partial_z -
\bar z\,\partial_{\bar z}\right),
$$
or in polar coordinates
$$
\xi_1+\ii\xi_2 = 
 \half \ee^{\ii\theta}\, \frac{1-\lambda r^2}{r}\, \partial_\theta + \half\, \ii\, \ee^{\ii\theta}\sigma\,\partial_r,
 \quad 
 \xi_3 = \partial_\theta.
$$

\paragraph{Symplectic structures}
Up to a scalar multiple, there exists a unique $\SO(q_\lambda)$-invariant symplectic form on $\M_\lambda$. We choose the scalar so that 
\begin{equation}\label{eq:family of symplectic forms}
 \Omega_{\lambda} = \frac2{\sigma^2}\dd x\wedge \dd y = \frac{\ii}{\sigma^2}\dd z\wedge \dd\bar z.
\end{equation}
The choice of scaling is such that the sphere $\M_\lambda$ of radius
$1/2\sqrt{\lambda}$ acquires symplectic area $2\pi\lambda^{-1}$.  With respect to the
basis $\{X_1,X_2,X_3\}$ for the Lie algebra, the momentum map
takes the form
\begin{equation}\label{eq:family of momentum maps}
  \J_\lambda(z) = \frac{1}{\sigma}(z, |z|^2).
\end{equation}
This coincides with the inclusion $\M_\lambda \hookrightarrow \RR^3$ given in (\ref{eq:stereographic}), which shows that $\Omega_{\lambda}$ coincides with the KKS symplectic form on the affine coadjoint orbit.

\paragraph{Green's functions}
The metric  (\ref{eq:family of metrics}) on $\RR^3$ induces
the metric on $\M_\lambda$. In terms of the uniform coordinate system
(\ref{eq:stereographic}), the metric tensor
is $\sigma^{-2}\mathrm{diag}[1, 1]$.  The Laplace-Beltrami operator on
$\M_\lambda$ is 
$$
\Delta f = \sigma^2\!\left(\frac{\partial^2}{\partial x^2} + \frac{\partial^2}{\partial
y^2}\right)f = \quarter\sigma^2\frac{\partial^2}{\partial z\partial
\bar z}\, f.
$$
The 2-point Green's function for this operator is
\begin{equation} \label{eq:green's function pole at infinity}
G(z;w) = \log |z-w|^2.
\end{equation}
This satisfies $\Delta_z G =0$ for $z\neq w$ and has a logarithmic singularity at $z=w$.  When we regard the plane as a model for (most of) the sphere, $G$ has another singularity at $z=\infty$.

An alternative Green's function is
\begin{equation} \label{eq:green's function opposite vortices}
G_1(z;w) = \log \frac{|z-w|^2}{|1+\lambda z\bar w|^2}.
\end{equation}
This satisfies $\Delta_zG_1(z;w)=0$ for $z\not\in \{w,\, -1/\lambda\bar w\}$ and has a logarithmic singularity at those excluded points (which are antipodal to each other), but is regular at $z=\infty$.

The Green's function that is usually used on the sphere is the `log of Euclidean distance', whose expression after stereographic projection is
\begin{equation} \label{eq:green's function from sphere}
G(z;w) =
\log \frac{|z-w|^2}{(1+\lambda|z|^2)(1+\lambda|w|^2)}.
\end{equation}
Away from the pole at $z=w$, this satisfies $\Delta_zG(z;w) = -4\lambda$, so it is not, if we go by
the book, a Green's function. This function is regular at $z=\infty$, and also has a well-defined limit as $w\to\infty$, namely $G(z;\infty) = - \log\sigma$ (up to an additive constant of $-\log\lambda$), which checks against $\Delta_z(-\log\sigma)=-4\lambda$.

In the next section we comment on the differences among these Green's functions in the context of the dynamics of point vortices.

\subsection{Hamiltonians for point vortices} \label{sec:vorticity}

   We recall how the Hamiltonian formalism for the the dynamics of point vortices works.  
   
   Let $(u, v)^T$ be the velocity field of an inviscid, incompressible flow on a domain $D
\subseteq \RR^2 \simeq \CC$.  The incompressibility 
$\frac{\partial}{\partial x} u + \frac{\partial}{\partial y} v = 0$ implies the existence of a {\it stream function\/}
$\psi : D \to \RR$ such that 
\begin{equation}\label{eq:Hamilton's equations}
u = \frac{\partial}{\partial y} \psi, \quad
v = -\frac{\partial}{\partial x} \psi.
\end{equation}
The curl of the velocity\footnote{The minus sign makes $-\Delta$ a positive operator.  But we shall be casual about the sign and use $+\Delta$
as well as $-\Delta$.  All that the casualness causes is to reverse the direction of the flow.} is 
$\frac{\partial}{\partial x} v - \frac{\partial}{\partial y} u = - \Delta \psi$.
The boundary condition for an inviscid flow is that $(u, v)^T$ be tangent to $\partial D$
everywhere, equivalently that every connected component of $\partial D$ be a level set
of $\psi$.  The total circulation along all the boundary components is, by Stokes's theorem,
\begin{equation}\label{eq:total circulation}
\int_{\partial D} u\,\dd x + v\,\dd y = \int_D -\Delta \psi\, \dd x\dd y, 
\end{equation}
the total curl present on $D$.  

   Now consider a model situation where the flow is generated by a curl concentrated at a 
singularity $z_0 = x_0 + \ii y_0$ and the circulation around that singularity is $2\pi \kappa\,$:
$$
-\Delta \psi(z) = 2\pi \kappa \delta(z - z_0).
$$
We recognize that, up to the sign and a scalar coefficient, $\psi$ is Green's function.  We say that we have a {\it point vortex\/} of {\it vorticity\/} $\kappa$ at $z_0$.  In the situation where we have
point vortices of vorticities $\kappa_1, \ldots, \kappa_n$ at $z_1, \ldots, z_n$, each vortex moves carried by the sum of the flows generated by all the other vortices.  The equations 
(\ref{eq:Hamilton's equations}) show that the dynamics then is Hamiltonian. 

   The theory is written analogously on any domain of any Riemann surface.  In particular, on
 the surfaces $\M_\lambda$ discussed above, the Hamiltonian is
$$
H_\lambda(z_1,\dots,z_N) =
-\frac{1}{4\pi}\sum_{i<j}\kappa_i\kappa_jG_\lambda(z_i;z_j).
$$
Note an unusual feature of this Hamiltonian system: unlike in classical mechanics, the phase space here is not the cotangent bundle of anything, but rather the $n$-fold product $\M_\lambda \times \cdots \times \M_\lambda$ (minus the diagonals if we want {\it a priori\/} to avoid collisions) with the 
weighted-sum symplectic form $\kappa_1 \Omega_\lambda \oplus \cdots \oplus \kappa_n \Omega_\lambda$.

   For $\lambda > 0$, $\M_\lambda$ is compact without boundary.  In this case, the fact of nature 
(\ref{eq:total circulation}) forces the total vorticity to be zero: $\sum_j \kappa_j = 0$.  This, in principle, bans placing a lone point vortex on $\M_\lambda$ for $\lambda > 0$ or on any closed Riemann surface.  The dodge around this ban, favoured in the literature, is to impose a constant {\it background vorticity\/}
$$
- \mathrm{sum~of~circulations/area} = - \lambda \sum_j \kappa_j ,
$$
which results in (\ref{eq:green's function from sphere}).  For a (geo)physical example, a rigidly rotating sphere entails such a background vorticity.  But even if background vorticity dodges around $\lambda > 0$, continuing it to $\lambda \leqslant 0$ gets us into trouble, for on these noncompact surfaces, background vorticity imparts an infinite amount of energy to the flow.  So for $\lambda \leqslant 0$ the family (\ref{eq:green's function pole at infinity}) seems preferable.

However, welding together (\ref{eq:green's function from sphere}) for $\lambda\geqslant 0$, (\ref{eq:green's function pole at infinity}) for $\lambda<0$ has a decisive defect: the resulting family is {\it not smooth\/} in $\lambda$. There are three options for having a smooth family.

\begin{description}
\item[1.] Use (\ref{eq:green's function pole at infinity}).  This costs postulating an immobile vortex of vorticity $-\sum_j \kappa_j$ at the North Pole for $\lambda>0$.

\item[2.] Use (\ref{eq:green's function opposite vortices}).  This costs introducing `counter-vortices'  at antipodes to the $z_j$s  for $\lambda > 0$ (recall that the point antipodal to $z$ is $-1/\lambda\bar z\,$).

\item[3.] Use (\ref{eq:green's function from sphere}). This costs infinite energy for $\lambda \leqslant 0$.
\end{description}
In the planar case $\lambda=0$ all three Green's functions 
(\ref{eq:green's function pole at infinity}), (\ref{eq:green's function opposite vortices}), 
(\ref{eq:green's function from sphere})
agree.

In this paper, we opt for the family defined in (\ref{eq:green's function from sphere}) with the constant background vorticity, because the infinite energy of a tame flow would not shock any fluid dynamicist---flows on the plane uniform at infinity and such are handled routinely---whereas the smoothness of the family, without postulating extraneous objects, is essential for us.  In section \ref{sec:other Hamiltonians} we sketch how the analysis can be adapted to the other two options.

\section{Nondegenerate analysis of vortex rings}
\label{sec:vortices}

In this section we study a ring of $n$ vortices with identical vorticities $\kappa$, shaped as an $n$-gon on $\M_{\lambda}$.  We evaluate the Hamiltonian, the momentum map, and the augmented Hamiltonian at the regular ring in section \ref{sec:regular ring}.  We obtain the Hessian of the augmented Hamiltonian and its spectral data in \ref{sec:Hessians}.  In \ref{sec:symplectic slice} we construct the symplectic slice and say what we can, as far as this linear analysis goes, about the stability of the ring.  The answers will depend not only on $n$ but
also on $\lambda$.

Recall that the augmented Hamiltonian is given by $H-\omega J$, where $J=J_\lambda$ is the conserved quantity coming from the rotational symmetry; it represents the Hamiltonian in a frame rotating with angular
velocity $\omega$.  Its critical points are therefore equilibria in the rotating frame, i.e.\ relative equilibria, and the Hessian restricted to the symplectic slice determines the stability.

\subsection{Regular ring}\label{sec:regular ring}

Let $\gamma$ be a primitive $n\,$th root of unity.  When the vortices are placed at the vertices  $z_j=r\ee^{2\pi\ii j/n}$ ($j = 1, \ldots, n$) of a regular $n$-gon of radius $r$, the Hamiltonian $H_\lambda$ takes the value 
$$
  h_\lambda(r^2) =
  -\frac{n\kappa^2}{8\pi}\;\sum_{j=1}^{n-1}G_\lambda(r,r\gamma^{\, j}),
$$
where $G_\lambda=\log\, \circ\, \rho_\lambda$ with 
$\rho_\lambda(r,r\gamma^{\, j}) = |1-\gamma^{\, j}|^2 r^2/(1+\lambda r^2)^2$. 
In view of the identity $\prod_{j=1}^{n-1}(1-\gamma^{\, j}) = n$, 
$$
 \prod_{j=1}^{n-1}\rho_\lambda(r, r\gamma^{\, j}) = n^2 \left( \frac{r}{1+\lambda
 r^2} \right)^{\! 2(n-1)},
$$
hence
$$
    h_\lambda(r^2) = -\frac{n(n-1)\kappa^2}{8\pi}\log \frac{r^2}{(1+\lambda
    r^2)^2} +\mathrm{const.}
$$
recall the momentum map given in (\ref{eq:family of momentum maps}). The first component vanishes for these regular rings, while the value of the second component at the regular ring is
$$
J_\lambda(r^2) = \frac{n\kappa r^2}{1+\lambda r^2}.
$$   
(Notice that the full momemtnum map is typeset in bold, while this component is not.) Ignoring the constant, the augmented Hamiltonian then takes the value
\begin{eqnarray*}
\widehat{h}_{\lambda} (r^2) &=& h_\lambda(r^2) - \omega J_\lambda(r^2)\\
 &=& -\frac{n(n-1)\kappa^2}{8\pi}\log \frac{r^2}{(1+\lambda
    r^2)^2} - \omega\frac{n\kappa r^2}{1+\lambda r^2},
\end{eqnarray*}
which admits a critical point at $r=r_0\neq0$ if and only if
\begin{equation}\label{eq:angular velocity}
\omega = \omega_{\, 0} = - \frac{(n-1)\kappa}{8\pi}\; \frac{1-\lambda^2 r_0^4}{r_0^2}.
\end{equation}
This is the angular velocity of the regular ring.
(It is a little surprising that $\omega_{\, 0}$ is even in $\lambda$.)


\subsection{Hessians}\label{sec:Hessians}
We continue with the system of $n$ identical point vortices, all with vorticity $\kappa$. In the uniform coordinate system on $\M_{\lambda}$, the Hamiltonian with Green's function (\ref{eq:green's function from sphere}) is
$$
H_\lambda(z_1,\dots,z_n) = -\frac{\kappa^2}{4\pi}\sum_{i<j}
\log \frac{|z_i-z_j|^2}{\sigma_i\sigma_j},
$$
where $\sigma_j = 1+\lambda|z_j|^2$ ($j=1,\ldots, n$). The augmented Hamiltonian then is 
$$
\widehat{H}_{\lambda}(z_1,\ldots, z_n) = 
H_\lambda(z_1,\ldots, z_n) -
\omega\sum_{j = 1}^n \kappa\frac{|z_j|^2}{\sigma_j}.
$$
We saw above that this is critical at $z_j = r_0\ee^{2\ii\pi j/n}$ and 
$\omega = \omega_{\, 0}$ as in (\ref{eq:angular velocity}). 
The entries in the Hessian are
\begin{equation}\label{eq:hessian}
\begin{array}{rclccrcl} %
\displaystyle\frac{\partial^2}{\partial r_j^2}\widehat{H}_{\lambda}
  &=& A &&&
\displaystyle\frac{\partial^2}{\partial r_j\partial r_k}\widehat{H}_{\lambda} &=&
 \displaystyle\frac{\kappa^2}{4\pi r_0^2 \left(1-\cos \frac{2\pi(j - k)}{n} \right)} \\[10pt]
\displaystyle\frac{\partial^2}{\partial\theta_j^2}\widehat{H}_{\lambda} &=&
  \displaystyle\frac{\kappa^2}{24\pi}(n^2-1)&&&
\displaystyle\frac{\partial^2}{\partial\theta_j\partial\theta_k}\widehat{H}_{\lambda} &=&
  -\displaystyle\frac{\kappa^2}{4\pi \left(1-\cos \frac{2\pi(j - k)}{n} \right)}\\[10pt]
&&\multicolumn{2}{l}{\displaystyle\frac{\partial^2}{\partial r_j\partial\theta_k}\widehat{H}_{\lambda}} &=\quad 0 & \multicolumn{2}{l}{\mbox{($\forall j, k$).}}
\end{array}
\end{equation}
At the critical point, and with $\omega=\omega_{\, 0}$, we have
\begin{equation}\label{eq:A}
A = \frac{(n-1)\kappa^2}{24\pi r_0^2\sigma^2}\left( (5-n)\sigma^2 + 6\widetilde{\sigma}^{\, 2}\right),
\end{equation} 
where
$$
\sigma=1+\lambda r_0^2, \quad \widetilde{\sigma}=1-\lambda r_0^2
$$ 
(so that $\sigma+\widetilde{\sigma}=2$).   We used the identity, valid for 
$0\leqslant \ell\leqslant n\,$,  
cf.\ \cite{Hansen}:
\begin{equation}\label{eq:identity}
\sum_{j=1}^{n-1} \frac{\cos(2\pi\ell j/n)}{1-\cos(2\pi
  j/n)}\ =\ \frac{1}{6}(n^2-1)-\ell(n-\ell).
\end{equation}

\paragraph{Eigenvalues of the Hessian}
The Hessian matrix $\dd^2\widehat{H}_{\lambda}$ is block-diagonal, with two $n\times n$ blocks both of which are symmetric circulant, so the eigenvalues can be written down at once.

Following the notation in \cite{LMR}, for $\ell=0,1,\dots,\lfloor n/2 \rfloor$ define the \emph{Fourier tangent vectors}
\begin{equation}\label{eq:Fourier basis}
\begin{array}{rcrcl}
  \zeta^{(\ell)}_r &=& \alpha^{(\ell)}_r + \ii\beta^{(\ell)}_r
  &=&
  \displaystyle\sum_{j=1}^n\exp(-2\pi\ii\,\ell j/n )\,\delta r_j
  \\[12pt]
  \zeta^{(\ell)}_\theta &=& \alpha^{(\ell)}_\theta + \ii\beta^{(\ell)}_\theta
  &=&
  \frac{1}{r_0}\,\displaystyle\sum_{j=1}^n\exp(-2\pi\ii\,\ell j/n)\, \delta\theta_j .
\end{array}
\end{equation}
Here $\delta r_j$ denotes the unit tangent vector in the $r_j$-direction, and similarly for 
$\delta\theta_j/r_0$.  For $\ell=0,\, n/2$, we have $\beta^{(\ell)}_r = \beta^{(\ell)}_\theta=0$ and the $\zeta^{(\ell)}$s are real. The $\alpha$s and $\beta$s form a set of $2n$ linearly independent vectors, forming a basis for the tangent space. For the record,
$$\delta r_j = \frac1n\sum_{\ell=1}^n \exp(2\pi\ii\,\ell j/n)\zeta_r^{(\ell)},$$
and similarly for $\delta\theta_j$.
The span $V_\ell = \left< \alpha^{(\ell)}_r, \alpha^{(\ell)}_\theta,\beta^{(\ell)}_r, \beta^{(\ell)}_\theta\right>$
is a subspace of {\it Fourier modes\/}; for $\ell=0,\, n/2$ they are 2-dimensional, while for all other indices $\ell$ they are 4-dimensional.

As each block of $\dd^2\widehat{H}_{\lambda}$ is circulant as well as symmetric, $\zeta^{(\ell)}_r$ and $\zeta^{(\ell)}_\theta$ (or rather their real and imaginary parts) are the eigenvectors.  The eigenvalues $\epsilon^{(\ell)}_r$ and $\epsilon^{(\ell)}_\theta$ (of course real) are found to be
\begin{equation}\label{eq:eigenvalues}
\begin{array}{ccl}
\epsilon^{(\ell)}_r &=& A + \frac{\kappa^2}{24\pi r_0^2}\left( (n^2-1)-6\ell(n-\ell)\right) \\[12pt]
 &=&  \frac{\kappa^2}{4\pi r_0^2}\left( 2(n-1)\frac{1+\lambda^2r_0^4}{\sigma^2} - \ell(n-\ell)\right)
\\[12pt]
\epsilon^{(\ell)}_\theta &=& \frac{\kappa^2}{4\pi}\ell(n-\ell),
\end{array} 
\end{equation}
where $A$ is given in (\ref{eq:A}).  In case $\ell=1$, these simplify to
\begin{equation}\label{eq:evalues-rho1}
\epsilon^{(1)}_r = (n-1)\frac{\kappa^2\widetilde{\sigma}^{\, 2}}{4\pi r_0^2\sigma^2},\quad
\epsilon^{(1)}_\theta = (n-1)\frac{\kappa^2}{4\pi},
\end{equation}
which are both strictly positive.


\subsection{Symplectic slice}\label{sec:symplectic slice}
Not all the eigenvalues are relevant to stability.  First, those that are zero because they correspond to directions along the group orbit should be discarded.  Second, those corresponding to directions transverse to the level set of the conserved quantities should be discarded, too. This is the process of reduction: restrict to ${\rm Ker}\, \dd\J$ (where $\mathbf{J=J}_\lambda$ is given in (\ref{eq:family of momentum maps})), then take the complement to the group orbit in that kernel.  The resulting space is called the \emph{symplectic slice}, which we denote by $\NN$.

We find ${\rm Ker}\,\dd\J$ using the Fourier basis above.  Identify $\gg_\lambda^*$ with $\CC\times\RR$.  Then

$$
\dd\J \zeta_r^{(0)} = \frac{nr_0}{\sigma^2}\pmatrix{0\cr 1}, \quad
\dd\J \zeta_r^{(1)} = \frac{n\widetilde{\sigma}}{\sigma^2}\pmatrix{1\cr 0}, \quad
\dd\J \zeta_\theta^{(1)} = \frac{n}{\:\sigma\,}\pmatrix{\ii\cr 0},
$$
while $\dd\J$ vanishes on all other Fourier tangent vectors. Write $V_1'=V_1\,\cap\, {\rm Ker}\,\dd\J$. Then
\begin{equation}\label{eq: V1'}
V_1'=\left<\sigma \alpha_r^{(1)} - \widetilde{\sigma}\beta_\theta^{(1)},\; \sigma \beta_r^{(1)}+\widetilde{\sigma}\alpha_\theta^{(1)}\right>.
\end{equation}
${\rm Ker}\,\dd\J$ is spanned by the $V_\ell\,$s with $\ell>1$, $V_1'$, and $\zeta_\theta^{(0)}$. The subspace generated by $\zeta_\theta^{(0)}$ is tangent to the group orbit (an infinitesimal rotation about the origin), so must be discarded.  Finally the symplectic slice is
\begin{equation}\label{eq:symplectic slice}
\NN=V_1'\oplus \bigoplus_{\ell=2}^{\lfloor n/2 \rfloor}V_\ell.
\end{equation}
The relevant eigenvalues are therefore $\epsilon^{(\ell)}_r$, $\epsilon^{(\ell)}_\theta$ for $\ell\geqslant 1$.  Now, with respect to the basis for $V_1'$ given in (\ref{eq: V1'}), the restriction of the Hessian to $V_1'$ is a scalar multiple of the identity.  So it has a double eigenvalue
\begin{equation}\label{eq:rho1'}
\epsilon_1' = \frac{n}2\,\sigma^2\epsilon_r^{(1)} + \frac{n}{2r_0^2}\widetilde{\sigma}^{\, 2}\epsilon_\theta^{(1)} =
\frac{n(n-1)\kappa^2}{4\pi r_0^2}\widetilde{\sigma}^{\, 2},
\end{equation}
which is strictly positive unless $\lambda r_0^2=1$. However, $\lambda r_0^2=1$ corresponds to the equator on the sphere, at which the momentum value is left fixed by all of $\SO(3)$.  In this case $\NN$ drops in dimension, $V_1'$ no longer lies in $\NN$, and the corresponding eigenvalue $\epsilon_1'$ becomes irrelevant to the stability of the relative equilibrium.

\paragraph{Stability}
The relative equilibria in question are rotating rings; thus they are periodic trajectories. 
The precise sense of stability we are adopting is like the ordinary one of Lyapunov stability, but in terms of $G_\mu$-invariant open sets, where
$\mu$ is the momentum value at the trajectory and $G_\mu$ is the stabilizer of $\mu$: in detail, for every $G_\mu$-invariant neighbourhood $V$ of the trajectory, there exists a $G_\mu$-invariant neighbourhood $U \subseteq V$ such that every trajectory starting in $U$ remains in $V$ for all time.

We can make do with a coarse criterion: if the restriction  of the Hessian of the augmented Hamiltonian to the symplectic slice is positive-definite, then the relative equilibrium is stable in our sense.  A finer criterion is: if this Hessian is merely non-negative but the augmented Hamiltonian admits a local extremum at the relative equilibrium, then the relative equilibrium is still stable \cite[Theorem 1.2]{Mo97}.

Among the relevant eigenvalues, $\epsilon_\theta^{(\ell)} > 0$ for $\ell \geqslant 1$ and 
$\epsilon_1'> 0$.  It remains to check the sign of $\epsilon_r^{(\ell)}$ for $\ell\geqslant 2$.
From the expression in (\ref{eq:eigenvalues}), it is clear that the least eigenvalue occurs for $\ell=\lfloor n/2 \rfloor$.  This is the criterion we have been after: the relevant eigenvalues are all strictly positive if and only if
\begin{equation}\label{eq:stability criterion}
 \frac{1+\lambda^2r_0^4}{(1+\lambda r_0^2)^2} > \frac1{2(n-1)}\Big\lfloor\frac{n^2}{4}\Big\rfloor .
\end{equation}
The left-hand side is unbounded as a function of $r_0$ if $\lambda < 0$.  
The right-hand side is a strictly increasing, unbounded function of $n$ for $n\geqslant 3$.  Hence, on one hand, for fixed $\lambda$, $r_0$ a value of $n$ exists beyond which the inequality fails; on the other hand, if $\lambda < 0$, for fixed $n$, by enlarging $r_0^2$ sufficiently close to $-1/\lambda$ (its supremum on the hyperbolic plane), we can ensure the inequality holds. We 
conclude with a result which for spheres is due to \cite{BC03} (see also \cite{LMR}):
\begin{theorem}
  On $\M_\lambda$, the relative equilibrium of $n$ identical point vortices in a regular ring of radius $a$ is Lyapunov-stable if (\ref{eq:stability criterion}) is satisfied, where $a$ and $r_0$ are related by (\ref{eq:radius}).  In particular, in the hyperbolic scenario $\lambda < 0$ this ring is stable if\/ $\lambda r_0^2$ is sufficiently close to $-1$.
\end{theorem}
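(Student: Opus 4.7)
My plan is to collate the explicit computations of Sections \ref{sec:Hessians} and \ref{sec:symplectic slice} with the coarse stability criterion recalled just before the theorem statement. First I would invoke that criterion: to establish Lyapunov-stability it suffices to check that the restriction of $\dd^2\widehat{H}_\lambda$ to the symplectic slice $\NN = V_1'\oplus\bigoplus_{\ell=2}^{\lfloor n/2\rfloor}V_\ell$ is positive-definite. The eigendata needed have already been catalogued: on each $V_\ell$ with $\ell\geqslant 2$ the Hessian acts diagonally with eigenvalues $\epsilon_r^{(\ell)}$ and $\epsilon_\theta^{(\ell)}$ from (\ref{eq:eigenvalues}), and on $V_1'$ it is a scalar multiple of the identity with the single eigenvalue $\epsilon_1'$ given in (\ref{eq:rho1'}).

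Next I would peel off what is already known to be positive. The $\theta$-direction eigenvalues $\epsilon_\theta^{(\ell)} = \kappa^2\ell(n-\ell)/(4\pi)$ are strictly positive for every $1\leqslant \ell\leqslant n-1$, and $\epsilon_1'$ is strictly positive as soon as $\lambda r_0^2 \neq 1$. The exceptional case $\lambda r_0^2 = 1$ corresponds to the equator of the sphere, where $V_1'$ drops out of $\NN$ and $\epsilon_1'$ becomes irrelevant to the criterion. Hence positive-definiteness on $\NN$ reduces to the positivity of $\epsilon_r^{(\ell)}$ for $2\leqslant \ell\leqslant \lfloor n/2\rfloor$. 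From the formula in (\ref{eq:eigenvalues}), $\ell(n-\ell)$ attains its maximum in that range at $\ell = \lfloor n/2\rfloor$, so $\epsilon_r^{(\ell)}$ is minimized there; the requirement that this minimum be strictly positive is precisely the inequality (\ref{eq:stability criterion}).

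For the ``in particular'' assertion about the hyperbolic case I would argue asymptotically. When $\lambda < 0$, the uniform coordinate obeys $\lambda r_0^2 > -1$, and as $\lambda r_0^2 \to -1^+$ we have $\sigma = 1+\lambda r_0^2 \to 0^+$ while $1+\lambda^2 r_0^4 \to 2$, so the left-hand side of (\ref{eq:stability criterion}) diverges to $+\infty$. Since the right-hand side depends only on $n$, the criterion is satisfied once $\lambda r_0^2$ is close enough to $-1$, which gives the claim.

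The main obstacle is not really an obstacle but a piece of bookkeeping: one must verify that when the Hessian is restricted to the subspace $V_1'$ spanned by the two mixed combinations displayed in (\ref{eq: V1'}), the result is indeed a scalar multiple of the identity with the positive eigenvalue (\ref{eq:rho1'}), and that multiplicities are counted correctly in the reduction from $V_1$ to $V_1'$ so that no sign-indefinite directions are smuggled back in. This amounts to evaluating $\dd^2\widehat{H}_\lambda$ on the explicit vectors of (\ref{eq: V1'}) using the eigendata (\ref{eq:evalues-rho1}) for $\epsilon_r^{(1)}$ and $\epsilon_\theta^{(1)}$, a computation that is effectively already carried out in Section \ref{sec:symplectic slice}, so the theorem becomes essentially a repackaging of the preceding linear analysis.
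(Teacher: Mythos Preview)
Your proposal is correct and follows essentially the same route as the paper: the theorem is a direct consequence of the preceding linear analysis, and the paper's argument, like yours, simply invokes the coarse criterion (positive-definite Hessian on $\NN$ implies Lyapunov stability), discards the automatically positive eigenvalues $\epsilon_\theta^{(\ell)}$ and $\epsilon_1'$, observes that among the remaining $\epsilon_r^{(\ell)}$ the minimum is achieved at $\ell=\lfloor n/2\rfloor$, and notes that the left-hand side of (\ref{eq:stability criterion}) is unbounded as $\lambda r_0^2\to-1^+$. Your bookkeeping remark about $V_1'$ is already handled by (\ref{eq:rho1'}) and the surrounding discussion, so nothing further is needed.
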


\begin{remark} 
Physical intuition confirms that a spherical surface destabilizes a ring whereas a hyperbolic surface stabilizes it.  Think of a vortex of the ring. As
$\lambda$ gets positive, the adjacent vortices remain relatively near while the diametrically opposite vortices become relatively far, so the former, which tend to knock our vortex perpendicularly to the ring, exert more influence than the latter, which tend to slide our vortex tangentially to the ring.  As
$\lambda$ gets negative, the effects are felt the other way round.
\end{remark}

\begin{remark}
If the inequality in (\ref{eq:stability criterion}) is reversed, then the ring is \emph{linearly unstable}.  This is because the $V_{\lfloor n/2\rfloor}$ mode will have real eigenvalues.  Indeed, for $n$ even $V_{n/2}$ is of dimension 2 and the Hessian is indefinite, which suffice to conclude that the eigenvalues are real.  For $n$ odd $V_{(n-1)/2}$ is 4-dimensional, so having an indefinite Hamiltonian does not imply the eigenvalues are real.  However, the negative eigenspace is spanned by $\alpha_r^{(\ell)},\beta_r^{(\ell)}$ which is Lagrangian, and then the realness of the eigenvalues follows.

If the parameters are such that (\ref{eq:stability criterion}) is an equality, then the stability is not determined by linear analysis.   This determination is what our deformation plus bifurcation approach achieves, in sections \ref{sec:n even} and \ref{sec:n odd}.
\end{remark}

We now spell out the conclusions concretely for each of the three geometries. See Figure \ref{fig:stability ranges} and the table of bifurcation points below. On the spheres $\lambda>0$ there are two bifurcation points: the value of $\lambda r_0^2$ listed in the table below and its reciprocal.

$$\begin{array}{c|cccccccccccccc}
n& 4&5&6&7&8&9&10&11&12&13\\
\hline
\rule{0pt}{2.3ex}\;\lambda r_0^2 &0.268 &0.172&0.0557 &0&-0.0627&-0.101&-0.143 & -0.172& -0.202&-0.225
\end{array}
$$

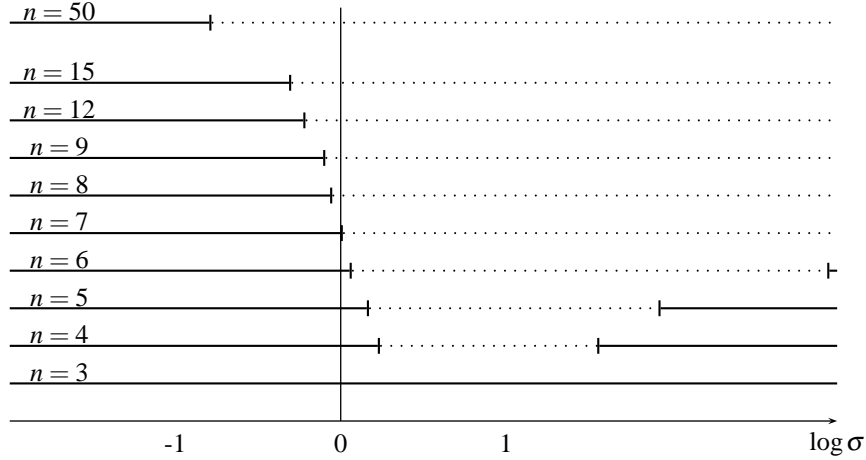
\begin{figure}[t]
\begin{center}
\psset{xunit=2.2}
\begin{pspicture}(-2,-2)(3.1,4)
  \psline[linewidth=0.5pt]{->}(-2,-1.5)(3,-1.5)   \rput(3,-1.8){$\log\sigma$}
  \rput(-1,-1.8){-1}   \rput(0,-1.8){0}  \rput(1,-1.8){1}
  \psline[linewidth=0.5pt](0,-1.5)(0,4) 
  \rput(-1.7,3.95){$n=50$} \psline(-2,3.8)(-0.794,3.8)
     \psline[linestyle=dotted]{|-}(-0.794,3.8)(3,3.8)
  \rput(-1.7,3.15){$n=15$} \psline(-2,3)(-0.312,3)
     \psline[linestyle=dotted]{|-}(-0.312,3)(3,3)
  \rput(-1.7,2.65){$n=12$} \psline(-2,2.5)(-0.226,2.5)
     \psline[linestyle=dotted]{|-}(-0.226,2.5)(3,2.5)
  \rput(-1.7,2.15){$n=9$} \psline(-2,2)(-0.106,2)
     \psline[linestyle=dotted]{|-}(-0.106,2)(3,2)
  \rput(-1.7,1.65){$n=8$} \psline(-2,1.5)(-0.064,1.5)
     \psline[linestyle=dotted]{|-}(-0.064,1.5)(3,1.5)
  \rput(-1.7,1.15){$n=7$} \psline(-2,1)(0,1)
     \psline[linestyle=dotted]{|-}(0,1)(3,1)
  \rput(-1.7,0.65){$n=6$} \psline(-2,0.5)(0.054,0.5)
     \psline[linestyle=dotted]{|-}(0.054,0.5)(2.94,0.5)
     \psline{|-}(2.94,0.5)(3,0.5)
  \rput(-1.7,0.15){$n=5$} \psline(-2,0)(0.158,0)
     \psline[linestyle=dotted]{|-}(0.158,0)(1.92,0)
     \psline{|-}(1.92,0)(3,0)
  \rput(-1.7,-0.35){$n=4$} \psline{-|}(-2,-0.5)(0.237,-0.5)
     \psline[linestyle=dotted](0.237,-0.5)(1.55,-0.5)
     \psline{|-}(1.55,-0.5)(3,-0.5)
  \rput(-1.7,-0.85){$n=3$} \psline(-2,-1)(3,-1)
\end{pspicture}
\caption{Ranges of stability (solid lines) and instability (dotted lines) for the dimensionless quantity $\log\sigma = \log(1+\lambda r_0^2)$}
\label{fig:stability ranges}
\end{center}
\end{figure}

\paragraph{Plane} There is an obvious scale-invariance, and the stability/instability of the ring is independent of the radius $a$. We recover J.J.~Thomson's original result \cite{Thomson} that the ring is stable when $n<7$ and unstable when $n>7$. When $n=7$, we have the so-called Thomson heptagon, whose stability is not determined by linear analysis (but cf.~Corollary \ref{coroll:Thomson}). 

\paragraph{Spheres}  When $n\geqslant 7$, the ring is always unstable. When $n<7$, there is a range of $a$ over which it is stable.  Example: when $n=6$, it is stable for $(\lambda r_0^2-2)^2>3$ which translates, bearing in mind $\lambda>0$, into
$$ 
r_0^2 < \frac{2-\sqrt{3}}{\lambda} \quad\mbox{or}\quad r_0^2 > \frac{2+\sqrt{3}}{\lambda}.
$$
The two inequalities correspond to neighbourhoods of the South and North Poles respectively. For $n=3$ the ring is always stable, at any radius $a$.  The linear stability of rings of vortices on the sphere was first studied by Polvani and Dritschel \cite{PD93}, and the full nonlinear stability in \cite{BC03}---see also \cite{LMR} for more details.

\paragraph{Hyperbolic planes}  When $n\leqslant 7$, the ring is always stable. When $n>7$, it is stable (for a given $\lambda$) for $a$ sufficiently large.  Example: when $n=15$, it is stable for
$$
r_0^2 > \frac{2-\sqrt{3}}{|\lambda|}.
$$

\section{Bifurcations across the degeneracy}
\label{sec:bifurcations}

To understand the bifurcations in detail, it is imperative to exploit the symmetries of the system, which for the ring of point vortices is the dihedral group.  We begin section \ref{sec:dihedral action}
 by setting up the dihedral symmetry of the system, and then in \ref{sec:dihedral bifurcations} list the bifurcations we expect in Hamiltonian systems with dihedral symmetry.  Following that, in \ref{sec:bifurcations of rings}, we state the main theorem (Theorem
 \ref{thm:bifurcations}) on which of these bifurcations occur in the dynamics of point vortices;  
 it leads to the nonlinear stability of the Thomson heptagon (Corollary \ref{coroll:Thomson}).  In \ref{sec:geometry of bifurcating rings} we visit the geometry of the bifurcating rings, which enjoy less symmetry (spontaneous symmetry breaking) and are illustrated in Figure \ref{fig:perturbations}. Finally, \ref{sec:degenerate} proves the main theorem: we perform the calculations needed to justify these `expectations', and decide which of the expectations are actually  realized.  For the sake of completeness, 
 section \ref{sec:equator} summarizes results from \cite{LMR01} on bifurcations from the equator; they are not covered by the other results as the momentum value there is degenerate (in the sense that it is fixed by the entire group rather than just a 1-parameter subgroup).

\subsection{Dihedral group action}\label{sec:dihedral action}

For the simplicity of language, we shall confuse $\M_\lambda$ and the uniform chart 
$\CC$ of section \ref{sec:surfaces}.  Points and group actions on $\CC$ should be interpreted as
their lifts on $\M_\lambda$.

The full system is invariant under the symmetry group $G_\lambda$ (depending  on $\lambda$) as in section \ref{sec:Lie algebra}. For every $\lambda$, $G_\lambda$ contains rotations about the origin, and reflections in lines through the origin, together generating a subgroup of $G_\lambda$ isomorphic to $\OO(2)$.  Consider now a system of $n$ identical point vortices.  A dihedral subgroup\footnote{$S_n$ is the group of permutations of the $n$ vortices and $\D_n$ has order $2n$.}
 $\D_n \subseteq \OO(2)\times S_n$ acts on the phase space $\CC^n$ by
\begin{equation}\label{eq:D-n action on phase space}
\begin{array}{lcl}
c \cdot (z_1,\dots,z_n) &=& (c z_n, c z_1,\dots, c z_{n-1})\\
m \cdot (z_1,\dots,z_n) &=& (\bar z_{n-1}, \bar z_{n-2},\dots, \bar z_1,\bar z_n).
\end{array}
\end{equation}
where $c=\exp(2\pi\ii/n)$ is a cyclic rotation and $m$ is a mirror reflection.  If the points 
$z_j=r_0\exp(2\pi\ii j/n)$ are placed as a regular ring, then that configuration is fixed by this
$\D_n$, and $m$ acts as a reflection in the line passing through $z_n$.  In case $n$ is odd, all reflections in $\D_n$ are conjugate, whereas in case $n$ is even, there are two distinct conjugacy classes of reflections, one consisting of those through opposite vertices of a regular $n$-gon (all conjugate to 
$m$), the other consisting of those through mid-points of opposite edges (all conjugate to $m'= c \, m$). This will come into play in deciding what bifurcating solutions appear.

Since the $\D_n$-action fixes the ring, $\D_n$ acts on the tangent space to the phase space at this ring. The Fourier basis (\ref{eq:Fourier basis}) for $n$ point vortices is adapted to this action: 
\begin{equation}\label{eq:action on modes}
\begin{array}{ccccccc}
c\cdot\zeta_r^{(\ell)} &=& \ee^{-2\pi\ii\ell/n}\,\zeta_r^{(\ell)} &\quad& m\cdot\zeta_r^{(\ell)} &=& \bar\zeta_r^{(\ell)}\\
c\cdot\zeta_\theta^{(\ell)} &=& \ee^{-2\pi\ii\ell/n}\,\zeta_\theta^{(\ell)} && m \cdot\zeta_\theta^{(\ell)} &=&  - \bar\zeta_\theta^{(\ell)},
\end{array}
\end{equation}
and $m'\cdot\zeta_r^{(\ell)} = \ee^{-2\pi\ii\ell/n} \,\bar\zeta_r^{(\ell)}$,
$m'\cdot\zeta_\theta^{(\ell)} = -\ee^{-2\pi\ii\ell/n}\, \bar\zeta_\theta^{(\ell)}$.
For the 2-dimensional subspace $V_1'$ of the symplectic slice $\NN$ (\ref{eq: V1'}), write
\begin{equation}\label{eq:zeta'}
\zeta' = (\sigma \alpha_r^{(1)} - \widetilde{\sigma}\beta_\theta^{(1)}) + \ii(\sigma\beta_r^{(1)} + 
\widetilde{\sigma}\alpha_\theta^{(1)}) = \sigma\zeta_r^{(1)} + \ii\widetilde{\sigma}\zeta_\theta^{(1)}.
\end{equation}
Then (\ref{eq:action on modes}) implies that $c\cdot\zeta' = \ee^{-2\pi\ii/n}\,\zeta'$, 
$m\cdot\zeta'= \overline{\zeta'}$, and $m'\cdot \zeta' = \ee^{-2\pi\ii/n}\,\overline{\zeta'}$.

\medskip

As the parameter $\lambda$ varies, the eigenvalue $\epsilon_r^{(\ell)}$, $\ell \geqslant 2$ in (\ref{eq:eigenvalues}) may cross $0$ and may involve a bifurcation in the mode $V_\ell$. In contrast  $\epsilon_r^{(1)}$ and $\epsilon_\theta^{(\ell)}$, being strictly positive, never involve bifurcations and in particular the mode $V_1'$ never bifurcates.


\subsection{Dihedral bifurcations}\label{sec:dihedral bifurcations}

The type of bifurcation expected in a symmetric Hamiltonian system is controlled by the group action on the generalized kernel of the linear system at the bifurcation point.  This was first investigated by Golubitsky and Stewart \cite{GS87}, cf.\ also \cite{BLM05}.  It follows from \cite{GS87} that in a {\it generic\/} family of linear Hamiltonian systems, a pair of eigenvalues come together along the imaginary axis, collide at the origin, and split along the real axis. This splitting transition is indeed what happens in our problem, as seen from the expressions (\ref{eq:eigenvalues}) for the eigenvalues. Part of the {\it genericity\/} hypothesis of \cite{GS87} is that the generalized kernel be an irreducible symplectic representation, which is satisfied here.

The greatest common divisor of $n$ and $\ell$ will be denoted by $(n, \ell)$.  We see  from (\ref{eq:action on modes}) that the cyclic subgroup $\ZZ_{(n,\ell)} \subset \D_n$ acts trivially on $V_{\ell}$.  Consequently on $V_\ell$ there is an effective action of $\D_{n/(n,\ell)}$. It turns out that $V_\ell$ is an irreducible symplectic representation of this group.  Two cases are to be distinguished: $\ell=n/2$ ($n$ even) when $V_{n/2}$ is 2-dimensional with an action of $\D_2 =\ZZ_2\times\ZZ_2$, and $\ell\neq n/2$ when the $V_\ell\,$s for $0<\ell<n/2$ are all 4-dimensional. For bifurcations the modes $1<\ell\leqslant n/2$ alone are of interest to us.
Much of the analysis of generic bifurcations with dihedral symmetry is found in \cite{GSS88}, and although there they deal with general vector fields rather than with Hamiltonian ones, the conclusions turn out to be the same. Analysis of the gradient case is also in \cite{BF93}.

\bigskip

In the dynamics of point vortices, if the bifurcating mode is $\ell\neq \lfloor n/2 \rfloor$, then the linear system\footnote{The vector field, not the Hessian.} at the relative equilibrium has real eigenvalues in the $\lfloor n/2 \rfloor$ mode,
hence is unstable.  If $\ell = \lfloor n/2 \rfloor$, then the bifurcation involves a loss of stability in the mode $\ell$.  This means that in our analysis below, a local minimum corresponds to stable relative equilibria only if we are looking at the $\lfloor n/2 \rfloor$ mode.

\paragraph{Bifurcations on $V_{\ell}$ for $\ell = n/2$}
This is the 2-dimensional symplectic span 
$$\left< \zeta^{(n/2)}_r, \zeta^{(n/2)}_\theta\right>,$$ 
with an action of $\D_2\simeq\ZZ_2\times\ZZ_2$.  The kernel of the Hessian at the bifurcation point is the 1-dimensional subspace spanned by $\zeta^{(n/2)}_r$, with a $\ZZ_2$-action. Then the generic bifurcation is a $\ZZ_2$-pitchfork, which can be either sub- or super-critical: if
subcritical, the bifurcating solutions are unstable, and if supercritical, they are stable (provided the original `central' solution was stable). A normal form is given by the family
\begin{equation}\label{eq:generic D2 function}
f_u(x,y) = - ux^2 \pm x^4 + y^2 + \mbox{h.o.t.}
\end{equation}
`h.o.t.' stands for higher-order terms in $x^2, y^2, u$.  The $+$ sign in front of $y^2$ is justified by the eigenvalue in the $\zeta^{(n/2)}_\theta$-direction, which is always positive.  The $-$ sign in
front of $u$ is a choice, dictated by the fact that increasing $\lambda r_0^2$ makes a critical point pass from local minimum to saddle, as shown in Figure \ref{fig:stability ranges}. The sign $+$ or $-$ in front of $x^4$ corresponds to supercritical or subcritical, respectively. See Figure \ref{fig:dihedral bifurcation diagrams}(a,b) and the lecture notes \cite{BLM05} for a fuller discussion. In Remark~\ref{rk:supercritical} we explain why the bifurcations occurring here are in fact all supercritical, for all even $n$.

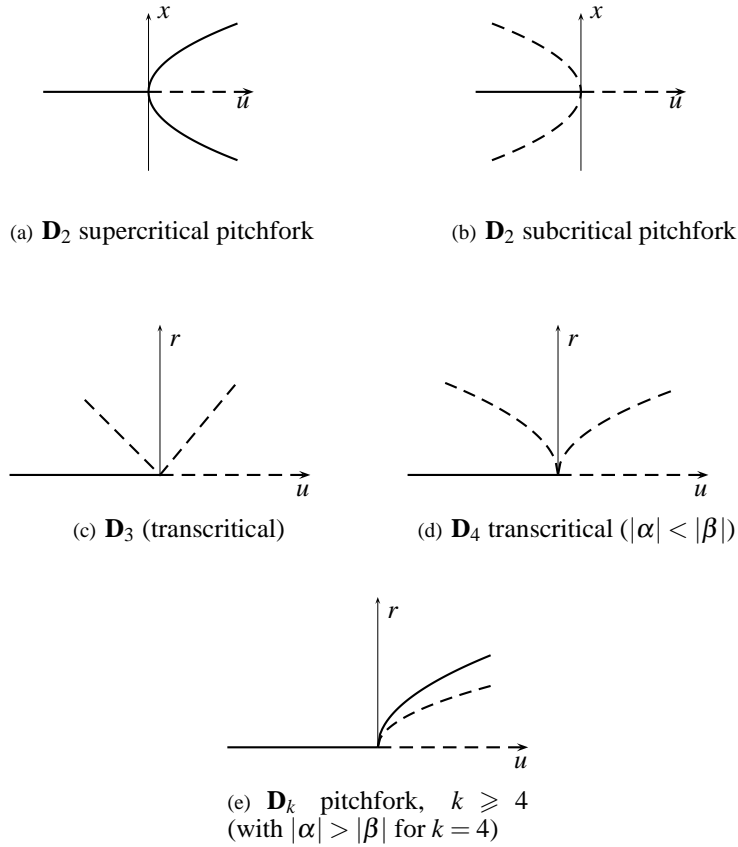
\begin{figure}[t]
\begin{center}\psset{unit=0.7}
\subfigure[\normalsize$\D_2$ supercritical pitchfork]{%
\begin{pspicture}(-3,-2.2)(3.5,1.5)
\psline(-2,0)(0,0)
\psline[linestyle=dashed]{->}(0,0)(2,0)\rput(1.8,-0.2){$u$}
\psline[linewidth=0.3pt]{->}(0,-1.5)(0,1.5)
\parametricplot{-1.3}{1.3}{t 2 exp t}
\rput(0.3,1.5){$x$}
\end{pspicture}}\hfil
\subfigure[\normalsize$\D_2$ subcritical pitchfork]{%
\begin{pspicture}(-3,-2.2)(3.5,1.5)
\psline(-2,0)(0,0)
\psline[linestyle=dashed]{->}(0,0)(2,0)\rput(1.8,-0.2){$u$}
\psline[linewidth=0.3pt]{->}(0,-1.5)(0,1.5)
\parametricplot[linestyle=dashed]{-1.3}{1.3}{t 2 exp neg t}
\rput(0.3,1.5){$x$}
\end{pspicture}}
\end{center}
\begin{center}
\subfigure[\normalsize$\D_3$ (transcritical)]{%
\begin{pspicture}(-2,-0.4)(2.5,2.5)
\psline(-2,0)(0,0)
\psline[linestyle=dashed]{->}(0,0)(2,0)
\psline[linewidth=0.2pt]{->}(0,0)(0,2)
\psline[linestyle=dashed](-1,1)(0,0)(1,1.2)
\rput(0.2,1.8){$r$}
\rput(1.9,-0.2){$u$}
\end{pspicture}}
  \qquad
\subfigure[\normalsize$\D_4$ transcritical ($|\alpha|<|\beta|$)]{%
\begin{pspicture}(-2,-0.4)(2.5,2.5)
\psline(-2,0)(0,0)
\psline[linestyle=dashed]{->}(0,0)(2,0)
\psline[linewidth=0.2pt]{->}(0,0)(0,2)
\psplot[linestyle=dashed]{-1.5}{0}{x -1 mul sqrt}
\psplot[linestyle=dashed]{0}{1.5}{x sqrt 1.1 div}
\rput(0.2,1.8){$r$}
\rput(1.9,-0.2){$u$}
\end{pspicture}}
  \qquad
\subfigure[\normalsize$\D_k$ pitchfork, $k\geqslant 4$ \quad (with $|\alpha|>|\beta|$ for $k=4$)]{%
\begin{pspicture}(-2,-0.4)(2,2.5)
\psline(-2,0)(0,0)
\psline[linestyle=dashed]{->}(0,0)(2,0)
\psline[linewidth=0.2pt]{->}(0,0)(0,2)
\psplot{0}{1.5}{x sqrt}
\psplot[linestyle=dashed]{0}{1.5}{x sqrt 1.5 div}
\rput(0.2,1.8){$r$}
\rput(1.9,-0.2){$u$}
\end{pspicture}}
\end{center}
\caption{Bifurcation diagrams for generic $\D_k$-bifurcations, $k\geqslant 2$. $r=\sqrt{x^2+y^2}$, and $u$ is the parameter as in the text. Solid lines refer to local minima, dashed lines to saddles and local maxima. For $k\geqslant 3$ each nontrivial branch corresponds to $k$ solutions after applying the rotations from $\D_k$. Figure (e) is drawn for $\alpha>1$, where $\alpha$ is the coefficient in (\ref{eq:generic Dn function}); if $\alpha <-1$, reflect the diagram in the $r$-axis.}
\label{fig:dihedral bifurcation diagrams}
\end{figure}

\paragraph{Bifurcations on $V_\ell$ for $\ell\neq n/2$} 
Of these only $\ell=\half(n-1)$ involves stable relative equilibria, other modes bifurcate only if the linear system already has real eigenvalues; that said, the other values of $\ell$ do involve the appearance of new unstable relative equilibria, so are of interest.  Write $k=n/(n,\,\ell)$.  Then $k \geqslant 4$ and $\D_k$ acts
effectively on $V_\ell$. The type of generic bifurcation we get depends on $k$. The 4-dimensional $V_\ell$ is a direct sum of two $\D_k$-invariant 2-dimensional Lagrangian subspaces, on one of which the Hessian vanishes at the bifurcation point, on the other it is always positive-definite.  A $\D_k$-invariant function on such a space is a function of the fundamental invariants
$$
N(x,y) = x^2+y^2,\quad P(x,y) = \mathop{\textrm{Re}}(x+\ii y)^k.
$$
A generic 1-parameter family of such functions is given by
\begin{equation}\label{eq:generic Dn function}
f_u = -uN + \alpha N^2+\beta P + \mbox{h.o.t.},
\end{equation}
where `h.o.t.' stands for higher-order terms in $N$, $P$, $u$.
Figure \ref{fig:dihedral bifurcations} shows the level sets of $f_u$ for $k=3, \ldots, 6$, as $u$ varies through $0$.

\begin{figure}[t]
\begin{center}

\subfigure[$\D_3$ with $u<0$]{\includegraphics[scale=0.15]{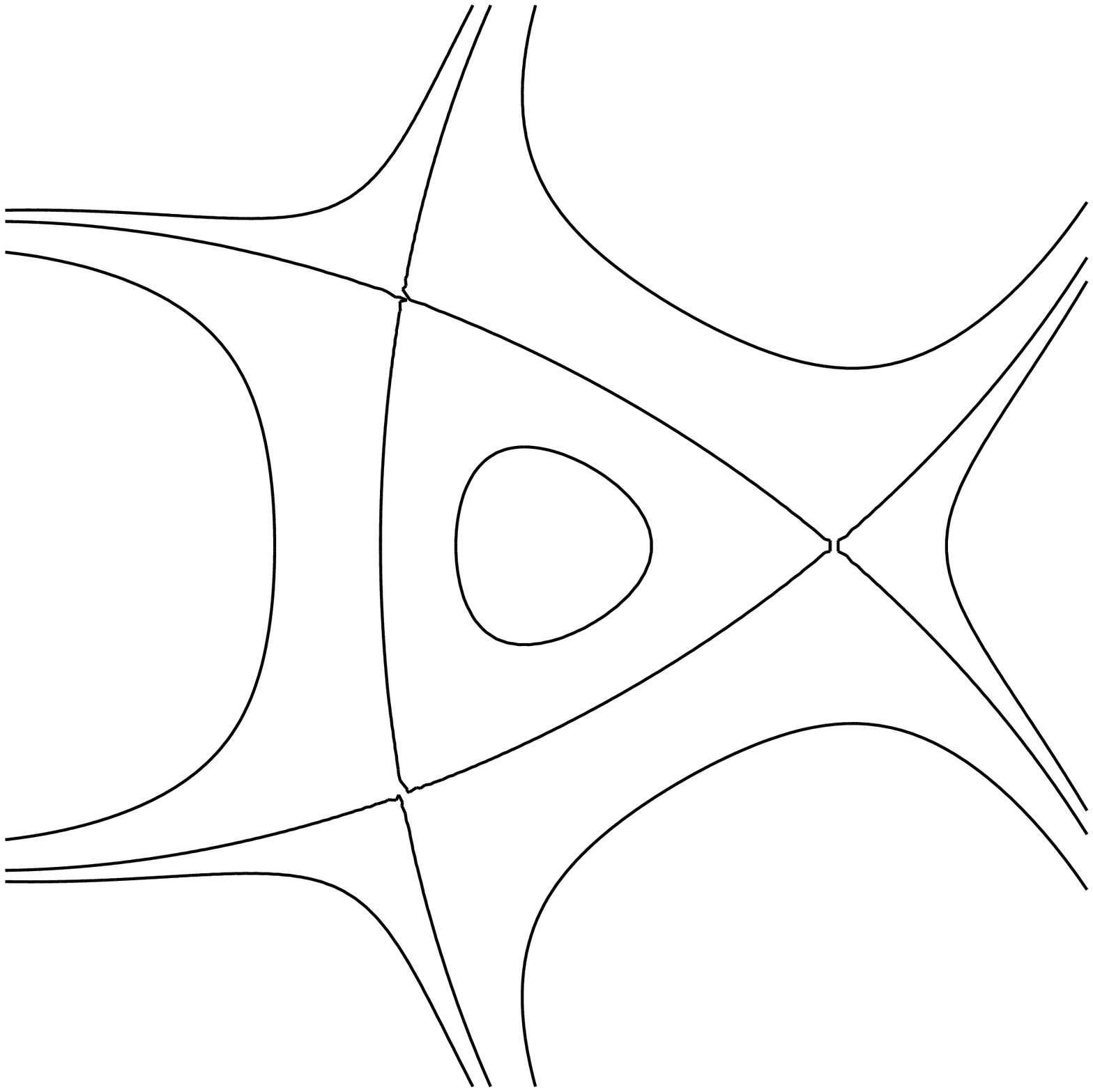}}
 \qquad\qquad
\subfigure[$\D_3$ with $u>0$]{\includegraphics[scale=0.15]{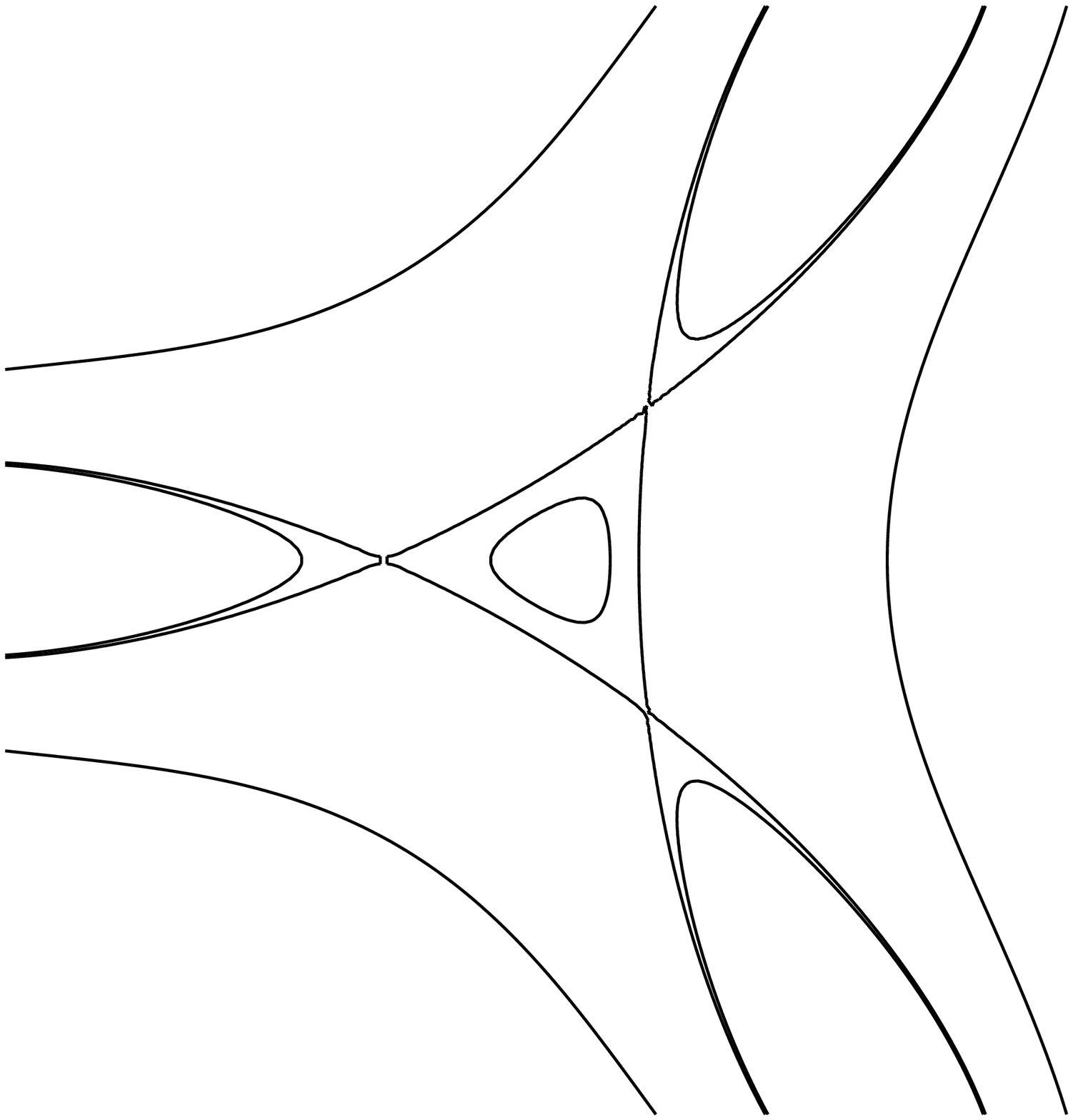}}

\subfigure[$\D_4$ with $|\alpha|<|\beta|$ and $u<0$]{\includegraphics[scale=0.15]{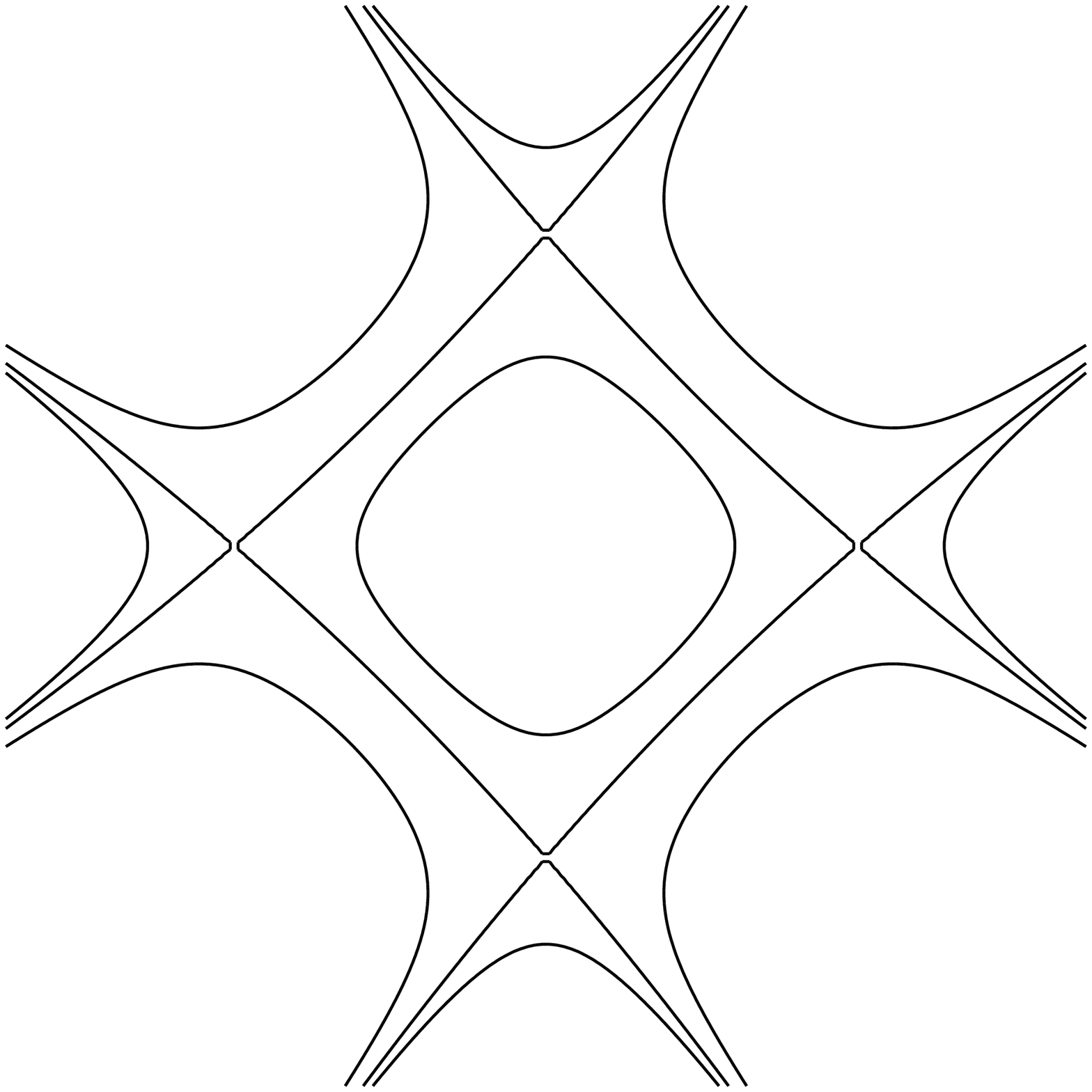}}
 \qquad\qquad
\subfigure[$\D_4$ with $|\alpha|<|\beta|$ and $u>0$]{\includegraphics[scale=0.15]{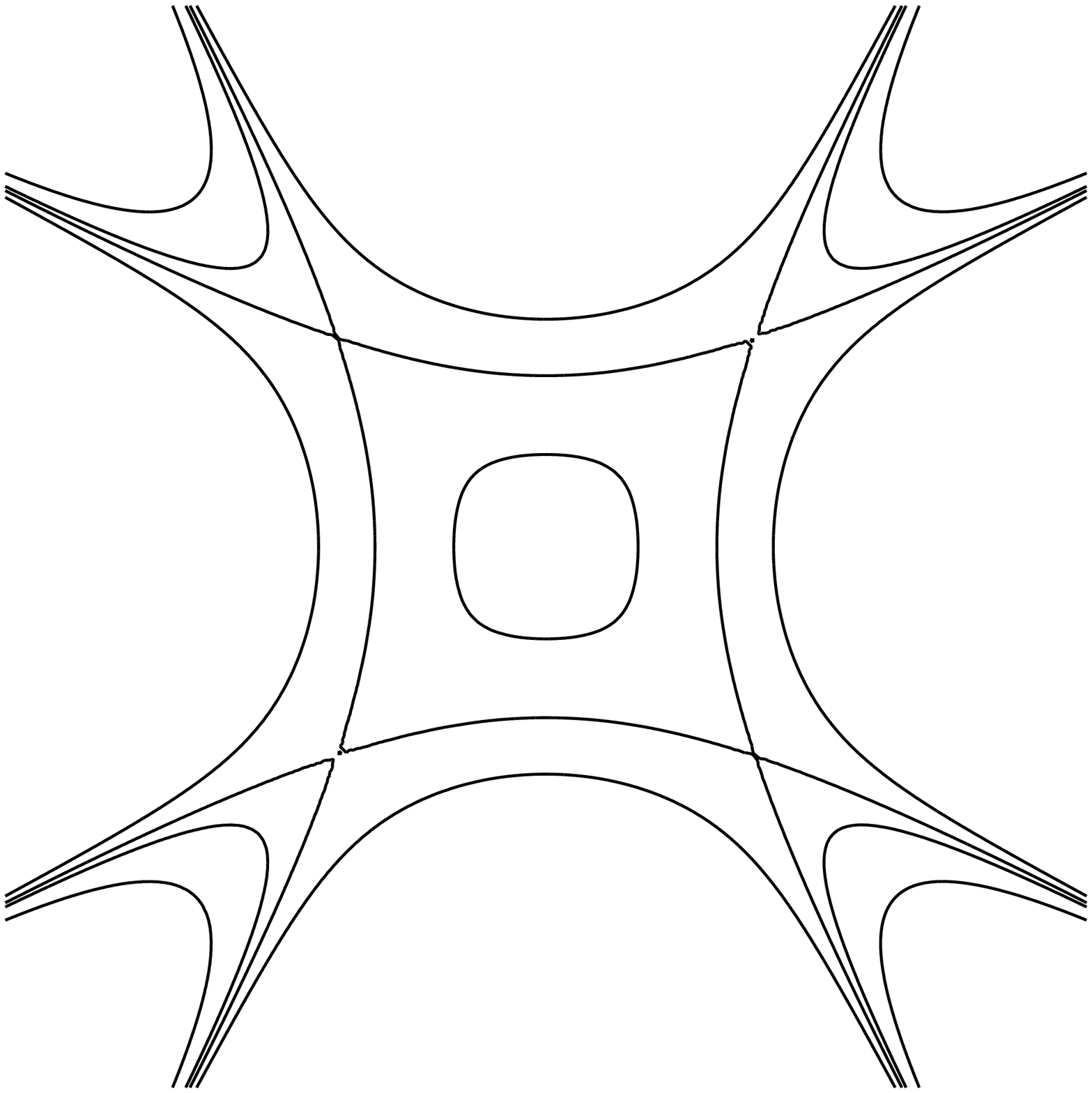}}

\caption{Contours of the generic 1-parameter family of $\D_k$-invariant functions (\ref{eq:generic Dn function}), for $k = 3,\,4$ [produced with {\tt Maple}, with a judicious choice of level sets] . The figures (a)--(d) are all transcritical bifurcations.  Continued on next page.}
\label{fig:dihedral bifurcations}
\end{center}
\end{figure}

\begin{figure}[t]
\addtocounter{figure}{-1}
\setcounter{subfigure}{4}
\begin{center}

%
\subfigure[$\D_4$ with $|\alpha|>|\beta|$, $u<0$]{\includegraphics[scale=0.15]{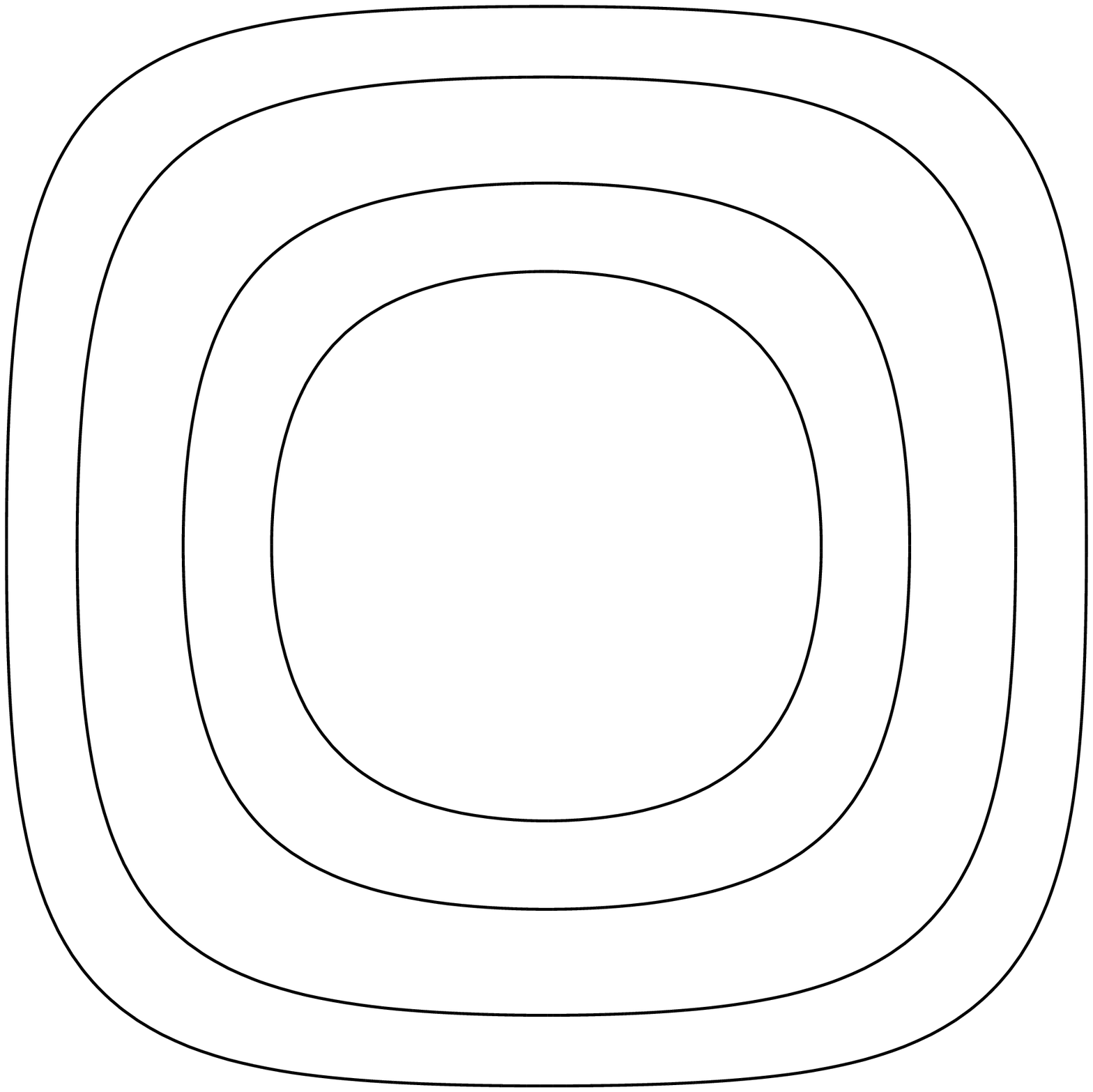}}
 \qquad\qquad
\subfigure[$\D_4$ with $|\alpha|>|\beta|$, $u>0$]{\includegraphics[scale=0.15]{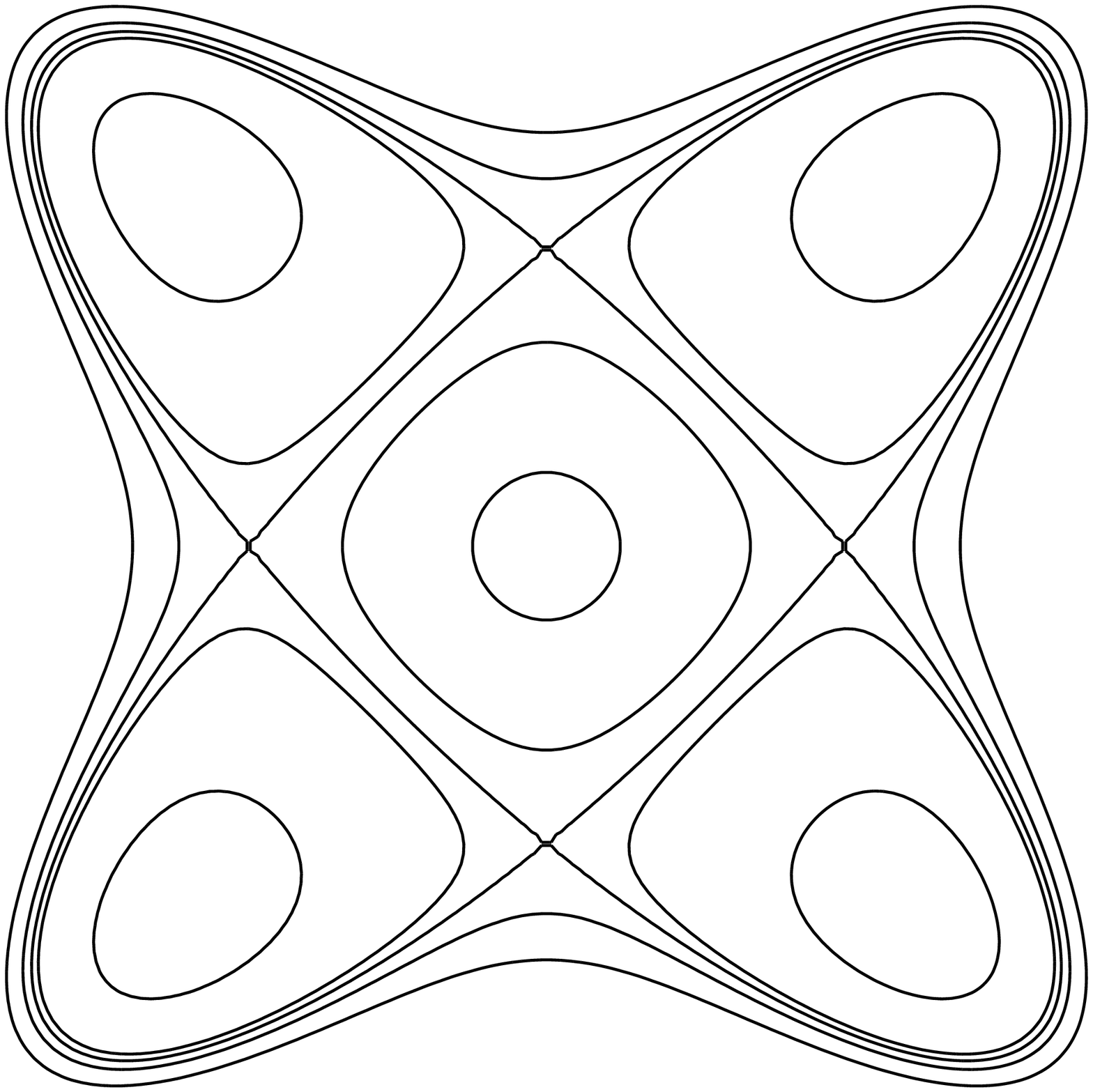}}
\subfigure[$\D_5$ with $u<0$]{\includegraphics[scale=0.15]{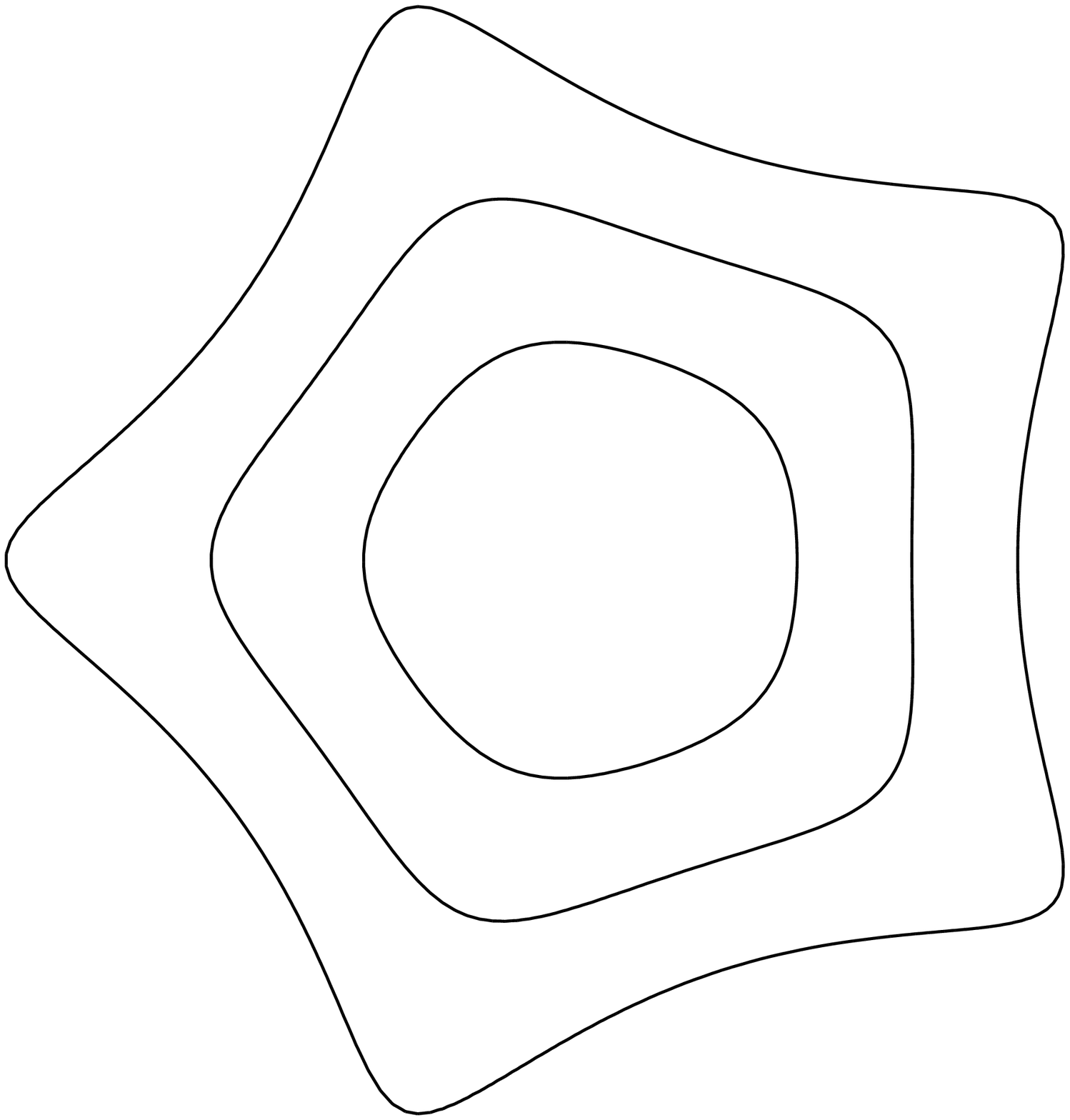}}
 \qquad\qquad
\subfigure[$\D_5$ with $u>0$]{\includegraphics[scale=0.15]{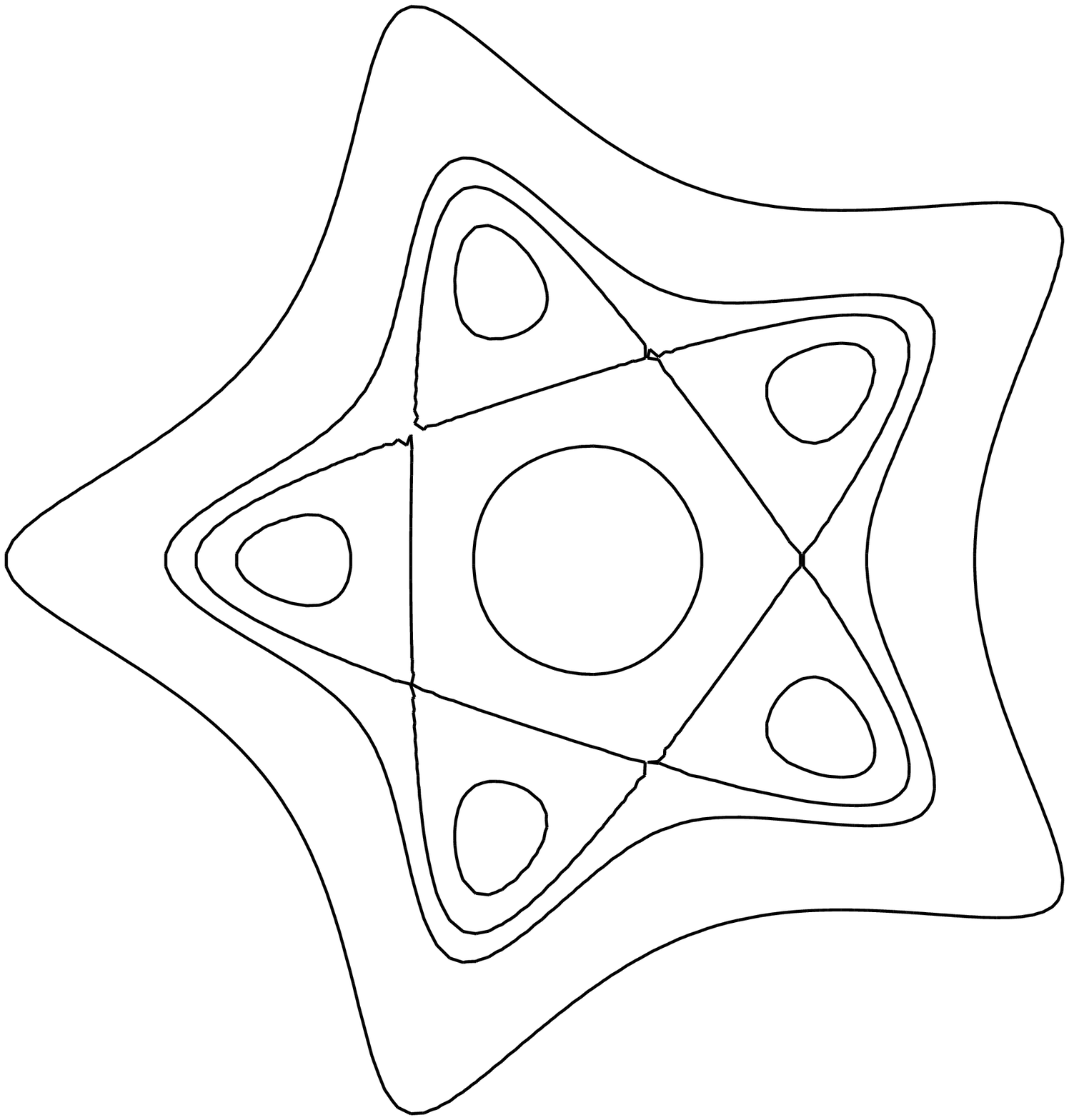}}
\subfigure[$\D_6$ with $u<0$]{\includegraphics[scale=0.15]{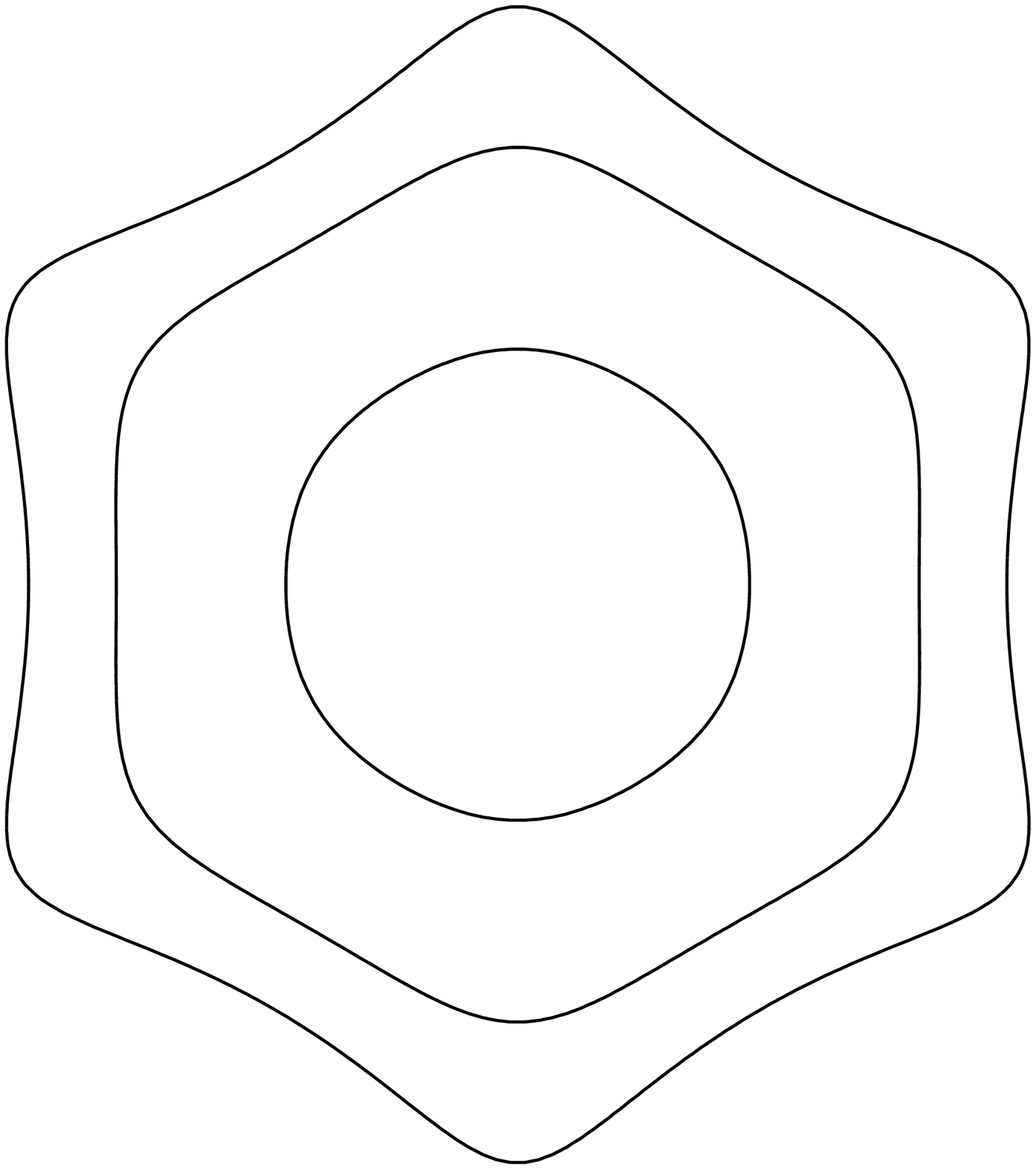}}
 \qquad\qquad
\subfigure[$\D_6$ with $u>0$]{\includegraphics[scale=0.15]{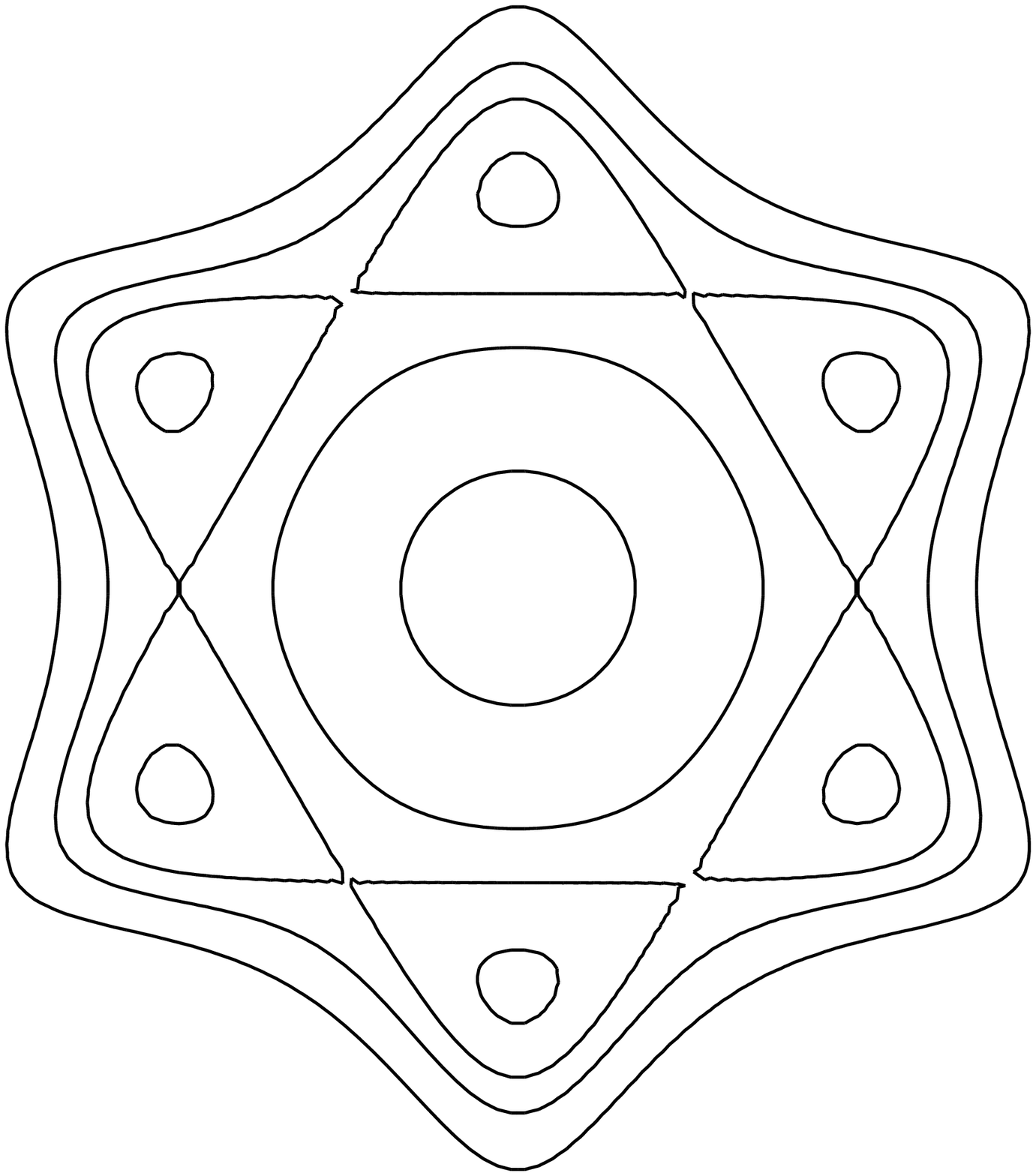}}

\caption{\textbf{(continued)} Contours of the generic 1-parameter family of $\D_k$-invariant functions (\ref{eq:generic Dn function}), for $k\geqslant 4$. These figures are all dihedral pitchfork bifurcations.}
\label{fig:dihedral bifurcations}
\end{center}
\end{figure}

\noindent$\bullet$ If $k=3$ or if $k=4$ with $\beta > \alpha$, then the bifurcation can be said to be transcritical, in that the bifurcating branches exist on both sides of the bifurcation point $u=0$, and the $k$ bifurcating points are all saddles (and hence unstable equilibria), each of which is fixed by a mirror reflection conjugate to $m$. See the bifurcation diagrams in Figure \ref{fig:dihedral bifurcation diagrams}(c, d).

\noindent$\bullet$ If $k=4$ and $\alpha > \beta$ or if $k>4$, then the bifurcation is like a pitchfork, in that all bifurcating equilibria coexist on the same side of the bifurcation point. But unlike the pitchfork, 2 types of bifurcating solutions appear, possibly with different stability properties; if $k$ is even, then one has symmetry type $\left< m \right>$ and the other  $\left< m' \right>$.  See the bifurcation
diagram in Figure \ref{fig:dihedral bifurcation diagrams}(e).

\begin{remark}
The finite determinacy and unfolding theorems of singularity theory guarantee that $f_0$ in (\ref{eq:generic D2 function}) and (\ref{eq:generic Dn function}) is finitely determined and h.o.t.\ may be ignored. If $n\leqslant 3$, then $f_0$ has codimension 1 provided $\beta \neq 0$, and $f_u$ is a versal unfolding of $f_0$, so that any deformation is equivalent to it.
If $n\geqslant 4$, then $f_0$ has codimension 2, and a versal unfolding is
$$
f_{u,v} = -uN + (\alpha + v)N^2 + \beta P
$$
provided $\beta \neq 0$ (and $\alpha \neq \pm \beta$ when $n=4$).  The parameter $v$ defines a 
{\it topologically\/} trivial deformation, i.e.\ $v$ can be eliminated via a continuous change of coordinates rather than a smooth one; nevertheless this homeomorphism will be a diffeomorphism away from the origin, so critical points are preserved.  The modulus $v$ that arises when $n=4$ is related to the cross-ratio of the 4 lines making up $f^{-1}(0)$.
\end{remark}

\subsection{Bifurcations of vortex rings}
\label{sec:bifurcations of rings}

We work with the parameter $\lambda r_0^2 > -1$.  Recall that $r_0$ is the radius of the vortex ring measured on $\CC$ after the stereographic projection, and that it is related  by (\ref{eq:radius}) to the radius $a$ measured on $M_\lambda$.  The 0 curvature case is $\lambda=0$, and in the spherical case $\lambda>0$ the values $\lambda r_0^2$ and $1/\lambda r_0^2$ are equivalent as they represent antipodal rings on the sphere. We therefore let 
$\lambda r_0^2$ vary in the range $(-1,\,1]$.

Now for the main theorem.   Stability means Lyapunov stability modulo rotations (same as orbital stability in our situation).  Instability means full spectral instability, i.e.\ at least one of the eigenvalues is real and positive. The ring of $n=3$ vortices is always nonlinearly stable.

\begin{theorem} \label{thm:bifurcations} Let $n \geqslant 4$. With $\lambda r_0^2\in(-1,1]$ as a parameter, the regular ring of $n$ identical vortices undergoes the following bifurcations, illustrated in Figure \ref{fig:bifurcation diagrams}:
\begin{description}
\item[all $n$] The ring is stable for $\lambda r_0^2<b_n$, where $b_n$ is the unique root in $(-1,1]$ 
of \footnote{cf.\ (\ref{eq:stability criterion}).} 
\begin{equation}\label{eq:b_n}
 \frac{1+ b_n^2}{(1+ b_n)^2} = \frac1{2(n-1)}\Big\lfloor\frac{n^2}{4}\Big\rfloor .
\end{equation}
\item[$n$ even] As $\lambda r_0^2$ crosses $b_n$, the ring loses stability via a supercritical pitchfork bifurcation, and  the (stable) bifurcating solution consists of a pair of $n/2$-gons with different values for the radius.
\item[$n$ odd] As $\lambda r_0^2$ crosses $b_n$, the ring loses stability via a supercritical bifurcation as depicted in Figure \ref{fig:dihedral bifurcation diagrams}(e), to 2 types of relative equilibria, each with a line of symmetry.
\item[all $n$]     As $\lambda r_0^2$ increases further, the ring undergoes a sequence of bifurcations, one in each of the modes $\lfloor n/2 \rfloor>\ell>1$; all the relative equilibria involved are unstable, and the bifurcating solutions have $\D_{(n,\ell)}$-symmetry.\footnote{We put $\D_1=\ZZ_2$ acting by reflection.}
\end{description}
\end{theorem}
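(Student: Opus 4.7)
The plan is to read off the sequence of bifurcations from the eigenvalue formulas (\ref{eq:eigenvalues}) together with the equivariant bifurcation framework of Section \ref{sec:dihedral bifurcations}. The stability claim for $\lambda r_0^2 < b_n$ is immediate from (\ref{eq:stability criterion}): on the symplectic slice $\NN$ the Hessian of $\widehat H_\lambda$ is block-diagonal, $\epsilon_\theta^{(\ell)}$ and $\epsilon_1'$ are strictly positive, and among the $\epsilon_r^{(\ell)}$ the first to vanish as $\lambda r_0^2$ increases is the one at $\ell=\lfloor n/2\rfloor$, because $\ell(n-\ell)$ is maximized there and $(1+\lambda^2 r_0^4)/\sigma^2$ is monotone decreasing on $(-1,1]$; the critical value is precisely $b_n$ defined by (\ref{eq:b_n}). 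Positive-definiteness on $\NN$ throughout $(-1,b_n)$ yields Lyapunov stability in the orbital sense.

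At $\lambda r_0^2 = b_n$ the kernel of the Hessian on $\NN$ sits inside $V_{\lfloor n/2\rfloor}$, and (\ref{eq:eigenvalues}) shows that $\epsilon_r^{(\lfloor n/2\rfloor)}$ crosses zero transversally. Setting $k = n/(n,\lfloor n/2\rfloor)$ gives $k=2$ for $n$ even and $k=n$ for $n$ odd; the action (\ref{eq:action on modes}) then places us respectively in the 2-dimensional $\D_2$-setting with normal form (\ref{eq:generic D2 function}), or in the 4-dimensional $\D_n$-setting (case (e) of Figure \ref{fig:dihedral bifurcation diagrams}, since $n\geqslant 5$) governed by (\ref{eq:generic Dn function}). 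The sign of the $u$-coefficient is pinned by (\ref{eq:eigenvalues}); the claim of supercriticality reduces to determining the sign of the quartic term for $n$ even, and of the relevant combination of $\alpha,\beta$ for $n$ odd. Computing these signs is the \textbf{main obstacle}: one must expand the logarithmic pair potentials $\log(|z_i-z_j|^2/\sigma_i\sigma_j)$ to fourth order in the Fourier coordinates on $V_{\lfloor n/2\rfloor}$, restrict to $\NN$, and sum over pairs using identities in the spirit of (\ref{eq:identity}). I would perform this calculation in closed form in $\lambda r_0^2$ and specialize to $\lambda r_0^2 = b_n$, checking against the planar value $\lambda=0$ (which contains the Thomson heptagon).

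The geometric description of the bifurcating branches is then read off from the isotropy of the nontrivial critical points. For $n$ even, the kernel direction $\zeta_r^{(n/2)} = \sum_j(-1)^j\delta r_j$ shifts alternate vertices in opposite radial directions, producing two concentric $n/2$-gons of different radii; the two prongs of the pitchfork are distinguished by which of the two $n/2$-gons has the larger radius. For $n$ odd, the two branches in Figure \ref{fig:dihedral bifurcation diagrams}(e) correspond to critical points lying on the two $\D_n$-invariant axes of $P(x,y)=\mathrm{Re}(x+\ii y)^n$, each fixed by a single reflection and hence a configuration with a line of symmetry. Finally, the further bifurcations at smaller $\ell$ are handled identically: at the value of $\lambda r_0^2$ where $\epsilon_r^{(\ell)}$ vanishes, $\D_{n/(n,\ell)}$ acts effectively on the 4-dimensional $V_\ell$, placing us in one of the diagrams of Figure \ref{fig:dihedral bifurcations}, and the isotropy of each new relative equilibrium is the subgroup $\D_{(n,\ell)}\subset\D_n$ acting trivially on $V_\ell$ according to (\ref{eq:action on modes}). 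Since the mode $\lfloor n/2\rfloor$ has already bifurcated, the linearized vector field already carries real eigenvalues, so every subsequent bifurcating equilibrium is linearly unstable, as asserted.
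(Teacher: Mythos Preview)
Your outline matches the paper's strategy closely: read off the sequence of bifurcations from the eigenvalue formulae, invoke the dihedral normal forms, and reduce supercriticality to a sign computation on the critical mode.  Two points, however, deserve correction.

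\medskip

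\textbf{The odd case is not a computation on $V_{(n-1)/2}$ alone.}  For $n$ odd the kernel $V_c=\langle\alpha_r^{((n-1)/2)},\beta_r^{((n-1)/2)}\rangle$ carries no cubic invariant, so $C\aa^3=0$; but there \emph{is} a nonzero cubic invariant of the form $C\aa^2\bb$ with $\bb\in V_1'$ (and only there---this follows from the selection rule $\ell_1\pm\ell_2\pm\ell_3\equiv 0\bmod n$).  Hence the sign you must check is not that of the raw fourth derivative $D\aa^4=\delta|\aa|^4$ on $V_c$, but of the combination obtained after Lyapunov--Schmidt elimination of $\bb$, namely $\delta-\gamma^{2}/\beta$ where $C\aa^2\bb=2\gamma\,\mathrm{Re}(z^2w)$ and $\beta=\epsilon_1'$.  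Your plan to ``expand to fourth order in the Fourier coordinates on $V_{\lfloor n/2\rfloor}$'' would compute only $\delta$ and would miss the $-\gamma^{2}/\beta$ correction; the coupling to $V_1'$ is the essential mechanism.

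\medskip

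\textbf{No closed form is available for odd $n$.}  The paper reports that the trigonometric sums arising for $\gamma$ and $\delta$ defeated both the authors and \texttt{Maple}; supercriticality is verified numerically for $n=5,7,9,11$ and is left as a conjecture for larger odd $n$.  So your proposed ``closed form in $\lambda r_0^2$'' is optimistic.  (For $n$ even the paper does succeed in closed form, but even there the positivity of the quartic coefficient is established via a \texttt{Maple} computation of the symmetric functions $T_1+T_2$ and $T_1T_2$ in the two roots of the bifurcation relation---not by a simple identity of type (\ref{eq:identity}).)
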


\noindent As a special case, we recover the following result of Kurakin and Yudovich \cite{KY02} and Schmidt \cite{Schmidt04}.  The calculation justifying it is the subject of section \ref{sec:n odd}.

\begin{corollary}\label{coroll:Thomson}
The Thomson heptagon is nonlinearly stable.
\end{corollary}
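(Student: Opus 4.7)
\emph{Plan of proof.} The table in section 2 gives $b_7 = 0$, so the Thomson heptagon coincides precisely with the bifurcation value $\lambda r_0^2 = b_7$ inside the 1-parameter family $\{\M_\lambda\}$. The idea is to deduce stability at the bifurcation value itself from the supercriticality clause of Theorem \ref{thm:bifurcations}, by applying the finer energy-momentum criterion quoted after Theorem 2.1.

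By (\ref{eq:eigenvalues}) and (\ref{eq:rho1'}), the Hessian of $\widehat{H}_0$ restricted to the symplectic slice $\NN$ is positive-definite at $\lambda = 0$ except on the 2-dimensional Lagrangian subspace $\langle\alpha_r^{(3)},\beta_r^{(3)}\rangle$ of $V_3$, where it vanishes identically. Theorem \ref{thm:bifurcations} classifies the bifurcation in the $V_3$-mode for $n = 7$ as a supercritical $\D_7$-pitchfork of the type in Figure \ref{fig:dihedral bifurcation diagrams}(e). The $\D_7$-equivariance forces the restriction of $\widehat{H}_0$ to that Lagrangian to take the normal form (\ref{eq:generic Dn function}) with $k = 7$, $u = 0$, namely $\alpha N^2 + \beta P + O(|\zeta|^8)$, in which the invariant $P = \mathrm{Re}(x + \ii y)^7$ has degree $7 > 4$ and is therefore of strictly higher order than $N^2$. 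Supercriticality is exactly the statement $\alpha > 0$, so $\widehat{H}_0|_\NN$ attains a (non-strict) local minimum at the heptagon configuration, and the nonlinear stability modulo rotations follows from [Mo97, Theorem 1.2].

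The proof thus reduces to verifying $\alpha > 0$, which is accomplished in section \ref{sec:n odd}. The main obstacle is the computation: one must expand $\widehat{H}_0$ to fourth order along the $V_3$-direction at the regular heptagon and pin down the sign of a single $\D_7$-invariant coefficient. The deformation approach renders this feasible by embedding the heptagon in a smooth 1-parameter family indexed by $\lambda$, so that the quartic coefficient is accessible through smooth dependence on $\lambda$ rather than through a Birkhoff normal form at the degenerate planar configuration; this is precisely where the paper's strategy of trading one machinery for another pays off.
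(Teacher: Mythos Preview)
There is a genuine gap. You reduce the problem to showing that the restriction of $\widehat H_0$ to the two-dimensional kernel $V_c=\langle\alpha_r^{(3)},\beta_r^{(3)}\rangle$ has a strict local minimum (your condition $\alpha>0$ on the coefficient of $N^2$). But a local minimum on $V_c$ does \emph{not} imply a local minimum on the full slice $\NN$: the cubic cross-term $C\aa^2\bb$ with $\aa\in V_c$ and $\bb$ ranging over the rest of $\NN$ can destroy it. For $n$ odd this term does not vanish. Proposition~\ref{prop: n even invariants}(iii) shows the only surviving contribution comes from $\bb\in V_1'$ and takes the form $2\gamma\,\mathrm{Re}(z^2 w)$ in complex coordinates $z$ on $V_c$, $w$ on $V_1'$. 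Lemma~\ref{lemma:probes} then requires positivity of
\[
\delta\,|z|^4 + 2\gamma\,\mathrm{Re}(z^2 w) + \beta\,|w|^2
\]
for all $(z,w)\neq 0$, which after completing the square means $\delta>0$ \emph{and} $\beta\delta>\gamma^{2}$. Your argument supplies only the first inequality (your $\alpha$ is the paper's $\delta$). The actual proof in section~\ref{sec:n odd} computes all three constants numerically at $\lambda=0$, $n=7$:
\[
\beta=\frac{21}{2\pi r_0^2},\qquad \gamma=\frac{63}{4\pi r_0^3},\qquad \delta=\frac{1071}{8\pi r_0^4},
\]
and checks $\beta\delta-\gamma^2>0$.

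There is also a logical circularity. You invoke the ``supercritical'' clause of Theorem~\ref{thm:bifurcations} as input, but the paper establishes supercriticality for odd $n$ precisely \emph{by} proving Lyapunov stability at the bifurcation point via the calculation above (cf.\ Remark~\ref{rk:supercritical} for the even case). So Theorem~\ref{thm:bifurcations} cannot be used as a black box here; the corollary and the supercriticality assertion are proved simultaneously by the same computation. Finally, your closing remark that the $\lambda$-deformation makes the quartic coefficient ``accessible through smooth dependence on $\lambda$'' is not how the paper proceeds for $n=7$: since $b_7=0$, the calculation is carried out directly at $\lambda=0$, and the family plays no role in evaluating $\beta,\gamma,\delta$.
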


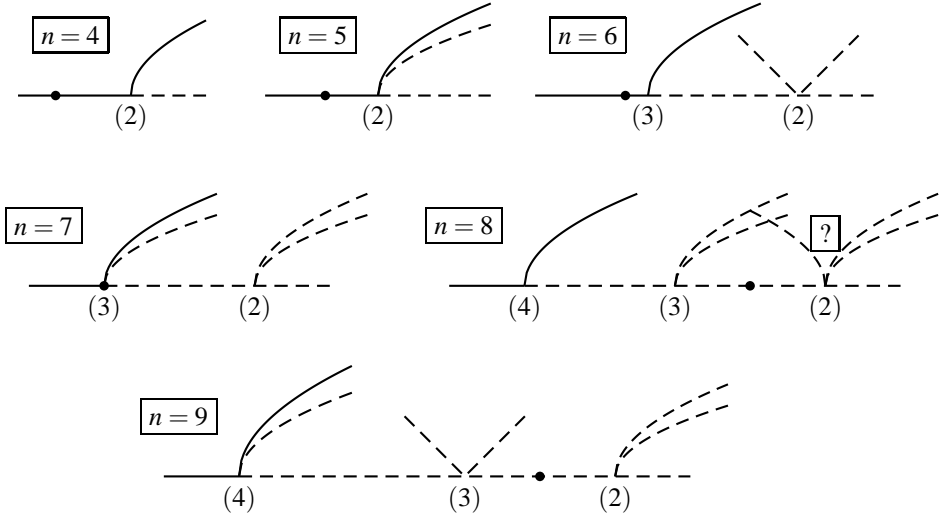
\begin{figure}[t]
\begin{center}
\begin{pspicture}(-1.5,-0.5)(1.7,1.5)
\rput(-0.8,0.8){\fbox{$n=4$}}
\psline(-1.5,0)(0,0) \psdot(-1,0) \psline[linestyle=dashed](0,0)(1,0)
\rput(0,-0.3){$(2)$}
\psplot{0}{1}{x 0.5 exp}
\end{pspicture}
\begin{pspicture}(-1.5,-0.5)(2,1.5)
\rput(-0.8,0.8){\fbox{$n=5$}}
\psline(-1.5,0)(0,0) \psdot(-0.7,0) \psline[linestyle=dashed](0,0)(1.5,0)
\rput(0,-0.3){$(2)$}
\psplot[linestyle=dashed]{0}{1.5}{x 0.5 exp 1.3 div}
\psplot{0}{1.5}{x 0.5 exp}
\end{pspicture}
\begin{pspicture}(-1.5,-0.5)(3,1.5)
\rput(-0.8,0.8){\fbox{$n=6$}}
\psline(-1.5,0)(0,0) \psdot(-0.3,0) \psline[linestyle=dashed](0,0)(3,0)
\rput(0,-0.3){$(3)$}
\psplot{0}{1.5}{x 0.5 exp}
\rput(2,-0.3){$(2)$}
\psline[linestyle=dashed](1.2,0.8)(2,0)(2.8,0.8)
\end{pspicture}

\begin{pspicture}(-1.5,-0.5)(4,2)
\rput(-0.8,0.8){\fbox{$n=7$}}
\psline(-1,0)(0,0) \psdot(0,0)\psline[linestyle=dashed](0,0)(3,0)
\rput(0,-0.3){$(3)$}
\psplot[linestyle=dashed]{0}{1.5}{x 0.5 exp 1.3 div}
\psplot{0}{1.5}{x 0.5 exp}
\rput(2,-0.3){$(2)$}
\psplot[linestyle=dashed]{2}{3.5}{x 2 sub 0.5 exp 1.3 div}
\psplot[linestyle=dashed]{2}{3.5}{x 2 sub 0.5 exp}
\end{pspicture}
\begin{pspicture}(-1.5,-0.5)(5,2)
\rput(-0.8,0.8){\fbox{$n=8$}}
\psline(-1,0)(0,0) \psdot(3,0) \psline[linestyle=dashed](0,0)(5,0)
\rput(0,-0.3){$(4)$}
\psplot{0}{1.5}{x 0.5 exp}
\rput(2,-0.3){$(3)$}
\psplot[linestyle=dashed]{2}{3.5}{x 2 sub 0.5 exp 1.3 div}
\psplot[linestyle=dashed]{2}{3.5}{x 2 sub 0.5 exp}
\rput(4,-0.3){$(2)$}
\psplot[linestyle=dashed]{4}{5.5}{x 4 sub 0.5 exp 1.3 div}
\psplot[linestyle=dashed]{4}{5.5}{x 4 sub 0.5 exp}
\psplot[linestyle=dashed]{3}{4}{x neg 4 add 0.5 exp}
 \rput(4,0.7){\fbox{?}}
\end{pspicture}

\begin{pspicture}(-1.5,-0.5)(5,2)
\rput(-1.8,0.8){\fbox{$n=9$}}
\psline(-2,0)(-1,0) \psdot(3,0) \psline[linestyle=dashed](-1,0)(5,0)
\rput(-1,-0.3){$(4)$}
\psplot{-1}{0.5}{x 1 add 0.5 exp 1.2 mul}
\psplot[linestyle=dashed]{-1}{0.5}{x 1 add 0.5 exp 1.1 div}
\rput(2,-0.3){$(3)$}
\psline[linestyle=dashed](1.2,0.8)(2,0)(2.8,0.8)
\rput(4,-0.3){$(2)$}
\psplot[linestyle=dashed]{4}{5.5}{x 4 sub 0.5 exp 1.3 div}
\psplot[linestyle=dashed]{4}{5.5}{x 4 sub 0.5 exp}
\end{pspicture}
\end{center}
\caption{Bifurcation diagrams for the ring of $n$ identical vortices for low values of $n$. The number in parentheses is the mode number bifurcating at that point. The black dot on the axis represents schematically the point where $\lambda=0$ (the plane).  $\lambda$ increases toward right. We do not know whether the bifurcating branches from the lower modes branch to the right or the left, though we believe they are as shown. The case $n=8$, $\ell=2$ has an effective action of $\D_4$, so could be transcritical or pitchfork---we do not know which occurs.}
\label{fig:bifurcation diagrams}
\end{figure}

\paragraph{Bifurcation values of $\lambda r_0^2$} 
The  tables below spell out the values of $\lambda r_0^2$ where the bifurcations occur, for $n=6,7,8, 9$. They are found by solving $\epsilon_r^{(\ell)}=0$ (\ref{eq:eigenvalues}) for $\lambda r_0^2$; the first values are those of $b_n$ mentioned in Theorem \ref{thm:bifurcations}.

$$\begin{array}{c|cc}
\multicolumn{3}{c}{n=6}\\[4pt]
\mbox{mode} & \;\ell=3 \;& \;\ell=2 \\
\hline
\rule{0pt}{10pt}\lambda r_0^2 & 0.056 &0.127
\end{array}
\qquad
\begin{array}{c|cc}
\multicolumn{3}{c}{n=7}\\[4pt]
\mbox{mode} & \;\ell=3 \;& \;\ell=2 \\
\hline
\rule{0pt}{10pt}\lambda r_0^2 & 0 &0.101
\end{array}
$$

\bigskip

$$
\begin{array}{c|ccc}
\multicolumn{4}{c}{n=8}\\[4pt]
\mbox{mode} &\ell=4 & \ell=3 & \ell=2 \\
\hline
\rule{0pt}{10pt}\lambda r_0^2 & -0.063 &-0.033 &\;0.084
\end{array}
\qquad\begin{array}{c|ccc}
\multicolumn{4}{c}{n=9}\\[4pt]
\mbox{mode} &\ell=4 & \ell=3 & \ell=2 \\
\hline
\rule{0pt}{10pt}\lambda r_0^2 & -0.101 & -0.056 &\; 0.072
\end{array}
$$

\bigskip

\noindent In all the tables, the bifurcation of the $\ell=2$ mode occurs for $\lambda>0$ (on the sphere);  this is easily checked to be true for all $n$.

\subsection{Geometry of bifurcating rings}
\label{sec:geometry of bifurcating rings}
At a bifurcation, the bifurcating mode controls the geometry/symmetry of the bifurcating solution.  Points in $V_\ell$ all correspond to configurations with cyclic symmetry $\ZZ_{(n,\ell)} \subset \D_n$, which allows the $\D_n$-action on $V_\ell$ to factor through a 
$\D_n/\ZZ_{(n,\ell)}\simeq\D_k$-action, where 
$$
k=n/(n,\ell).
$$ 
Moreover, the bifurcating solutions are all fixed by a reflection in $\D_k$ (conjugate to $m$ or to $m'$), which implies that they are symmetric under a $\ZZ_{(n,\ell)}$-action and under a reflection, together giving a symmetry of 
$\D_{(n,\ell)}$.
This means that if $(n,\ell)>1$, then the configuration consists of $k$ rings of $(n,\ell)$ vortices in each. Typical deformed configurations with the correct symmetry in each mode for $n=3, \ldots, 8$ are shown in Figure \ref{fig:perturbations}.

In particular, when $n$ is even and $\ell=n/2$, the solutions have symmetry isomorphic to $\D_{n/2}$. Now in $\D_n$ sit 2 non-conjugate copies of $\D_{n/2}$, one containing $m$, the other containing $m'$, and since $\zeta_r^{(n/2)}$ is fixed by $m$, the bifurcating solutions must have the symmetry $\D_{n/2}$ containing $m$.  Consequently the bifurcating solution consists of a pair of regular $n/2$-gons, in general of different radii, staggered by $2\pi/n$ as shown in Figures \ref{fig:perturbations}(a,d,k).

When $\ell\neq n/2$, the $\D_n$-action factors through a $\D_k$-action, and all bifurcating solutions have reflexive symmetry of $m$ or $m'$ (as seen from Figure \ref{fig:dihedral bifurcations}). Now if $k$ is odd, the resulting reflections are conjugate, so a configuration fixed by $m$ will also be fixed by some conjugate of $m'$.  This fact is illustrated in Figure \ref{fig:perturbations:6,2} where $n=6$, $\ell=2$,  $k=3$. Indeed, as $\D_n$ has $n$ reflections while $\D_k$ has $k$, in the representation $\D_n\to\D_k$ we must have $(n,\ell)$ reflections in $\D_n$ that get identified, thereby fixing the same configurations. On the other hand, if $k$ is even (as in $n=8$, $\ell=2$, Figures \ref{fig:perturbations}(g,h), 
the nonconjugate $m$ and $m'$ in $\D_n$ have as images 2 nonconjugate reflections in
$\D_k$, so the latter's fixed-point sets correspond to different configurations.

Finally, whenever $\ell$ divides $n$, a perturbation in the $\zeta_r^{(\ell)}$-direction produces $n/\ell$ rings of $\ell$-gons, in general of slightly different radii, and in the configurations with reflexive symmetry $m$  the vortices in the different $\ell$-gons line up with the original $n$-gon.

\begin{figure}[t]
\begin{center}
\psset{unit=1.5,dotsep=2pt,linewidth=0.5pt,dotsize=4pt}

\subfigure[$n=4,\:\ell=2$]{\begin{pspicture}(-1,-1)(1,1) 
 \psline[linestyle=dotted,linewidth=1pt,linewidth=1pt](0., 1.)(-1., 0.)(0., -1.)(1., 0.)(0., 1.)
\psline[showpoints=true](0., .8)(-1.2, 0.)(0., -.8)(1.2, 0.)(0., .8)
\psline[linecolor=lightgray](-0.3,0)(0.3,0)
\psline[linecolor=lightgray](0,-0.3)(0,0.3)
\end{pspicture}\label{fig:perturbations:4,2}}\qquad\qquad
\subfigure[$n=5,\:\ell=2$]{\begin{pspicture}(-1,-1)(1,1) 
 \psline[linestyle=dotted,linewidth=1pt](1., 0.)(.306, .952)(-.809, .588)(-.809, -.588)(.306, -.952)(1., 0.)
\psline[showpoints=true](.312, .780)(-.914, .546)(-.914, -.546)(.312, -.780)(1.2, 0.)(.312, .780)
\psline[linecolor=lightgray](-0.3,0)(0.3,0)
\end{pspicture}}

\subfigure[$n=6,\:\ell=2$]{\begin{pspicture}(-1,-1)(1,1) 
 \psline[linestyle=dotted,linewidth=1pt](.500, .865)(-.500, .865)(-1., 0.)(-.500, -.865)(.500, -.865)(1., 0.)(.500, .865)
\psline[showpoints=true](.512, .800)(-.512, .800)(-1.1, 0.)(-.512, -.800)(.512, -.800)(1.1, 0.)(.512, .800)
\psline[linecolor=lightgray](-0.3,0)(0.3,0)
\psline[linecolor=lightgray](0,-0.3)(0,0.3)
\end{pspicture}\label{fig:perturbations:6,2}}
\qquad\quad
\subfigure[$n=6,\:\ell=3$]{\begin{pspicture}(-1,-1)(1,1) 
 \psline[linestyle=dotted,linewidth=1pt](.500, .865)(-.500, .865)(-1., 0.)(-.500, -.865)(.500, -.865)(1., 0.)(.500, .865)
\psline[showpoints=true](.450, .778)(-.550, .952)(-.9, 0.)(-.550, -.952)(.450, -.778)(1.1, 0.)(.450, .778)
\psline[linecolor=lightgray](-0.3,0)(0.3,0)
\psline[linecolor=lightgray](-.1500, .2598)(.1500, -.2598)
\psline[linecolor=lightgray](-.1500, -.2598)(.1500, .2598)
\end{pspicture}\label{fig:perturbations:6,3}}

\subfigure[$n=7,\:\ell=2$]{\begin{pspicture}(-1,-1)(1,1) 
 \psline[linestyle=dotted,linewidth=1pt](.623, .782)(-.219, .976)(-.901, .434)(-.901, -.434)(-.219, -.976)(.623, -.782)(1., 0.)(.623, .782)
\psline[showpoints=true](.647, .735)(-.220, .883)(-.972, .425)(-.972, -.425)(-.220, -.883)(.647, -.735)(1.1, 0.)(.647, .735)
\psline[linecolor=lightgray](-0.3,0)(0.3,0)
\end{pspicture}}
\qquad\qquad
\subfigure[$n=7,\:\ell=3$]{\begin{pspicture}(-1,-1)(1,1) 
 \psline[linestyle=dotted,linewidth=1pt](.623, .782)(-.219, .976)(-.901, .434)(-.901, -.434)(-.219, -.976)(.623, -.782)(1., 0.)(.623, .782)
 \psline[showpoints=true](.584, .698)(-.270, 1.02)(-.860, .468)(-.860, -.468)(-.270, -1.02)(.584, -.698)(1.1, 0.)(.584, .698)
\psline[linecolor=lightgray](-0.3,0)(0.3,0)
\end{pspicture}}
\end{center}
\caption{Perturbations of the $n$-ring in mode $\ell$. These configurations are invariant under a subgroup isomorphic to $\D_{(n,\ell)}$, and the grey lines in the centre of each represent the lines of reflection. The dotted figures are the regular $n$-gons. See section \ref{sec:geometry of bifurcating rings} for explanations. Continued on next page.
}
\end{figure}
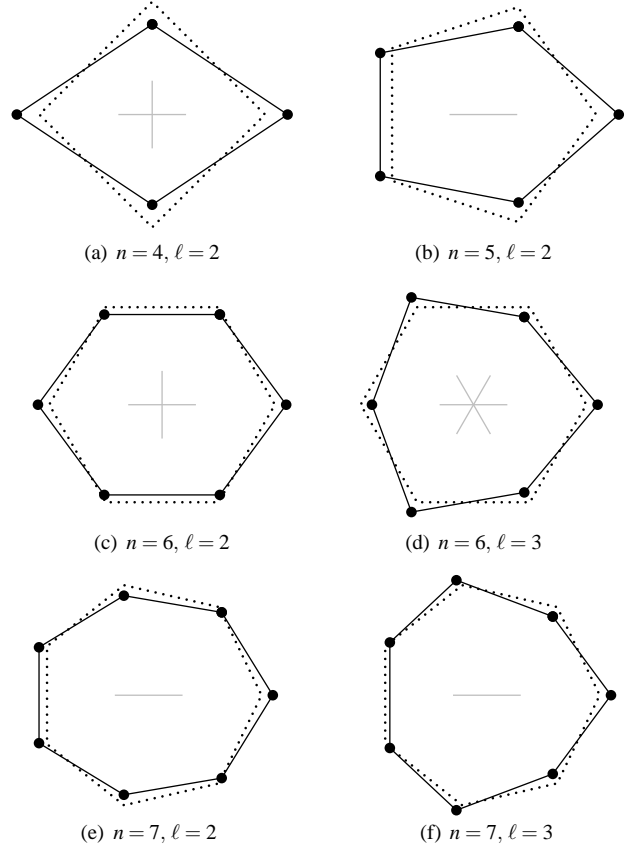

\begin{figure}
\addtocounter{figure}{-1}
\setcounter{subfigure}{6}
\begin{center}
\psset{unit=1.5,dotsep=2pt,linewidth=0.5pt,dotsize=4pt} 

\subfigure[$n=8,\:\ell=2$ (Fix\,$m$)]{\begin{pspicture}(-1,-1)(1,1) 
 \psline[linestyle=dotted,linewidth=1pt](.705, .705)(0., 1.)(-.705, .705)(-1., 0.)(-.705, -.705)(0., -1.)(.705, -.705)(1., 0.)(.705, .705)
 \psline[showpoints=true](.740, .670)(0., .9)(-.740, .670)(-1.1, 0.)(-.740, -.670)(0., -.9)(.740, -.670)(1.1, 0.)(.740, .670)
 \psline[linecolor=lightgray](-0.3,0)(0.3,0)
\psline[linecolor=lightgray](0,-0.3)(0,0.3)
\end{pspicture}\label{fig:perturbations:8,2a}}
\qquad\quad
\subfigure[$n=8,\:\ell=2$ (Fix\,$m'$)]{\begin{pspicture}(-1,-1)(1,1) 
 \psline[linestyle=dotted,linewidth=1pt](.705, .705)(0., 1.)(-.705, .705)(-1., 0.)(-.705, -.705)(0., -1.)(.705, -.705)(1., 0.)(.705, .705)
 \psline[showpoints=true](.729, .779)(-0.0352, .930)(-.631, .681)(-1.07, 0.0352)(-.729, -.779)(0.0352, -.930)(.631, -.681)(1.07, -0.0352)(.729, .779)
\psline[linecolor=lightgray](-.277, -.115)(.277, .115)
\psline[linecolor=lightgray](.115,-.277)(-.115,.277)
\end{pspicture}\label{fig:perturbations:8,2b}}

\subfigure[$n=8,\:\ell=3$ (Fix\,$m$)]{\begin{pspicture}(-1,-1.1)(1,1) 
 \psline[linestyle=dotted,linewidth=1pt](.705, .705)(0., 1.)(-.705, .705)(-1., 0.)(-.705, -.705)(0., -1.)(.705, -.705)(1., 0.)(.705, .705)
\psline[showpoints=true](.681, .631)(-0.05, 1.)(-.729, .779)(-.9, 0.)(-.729, -.779)(-0.05, -1.)(.681, -.631)(1.1, 0.)(.681, .631)
\psline[linecolor=lightgray](-0.3,0)(0.3,0)
\end{pspicture}}
\qquad\quad
\subfigure[$n=8,\:\ell=3$ (Fix\,$m'$)]{\begin{pspicture}(-1,-1.1)(1,1) 
 \psline[linestyle=dotted,linewidth=1pt](.705, .705)(0., 1.)(-.705, .705)(-1., 0.)(-.705, -.705)(0., -1.)(.705, -.705)(1., 0.)(.705, .705)
\psline[showpoints=true](.701, .765)(0.019, .908)(-.762, .790)(-.962, -0.0462)(-.712, -.648)(0.0190, -1.09)(.654, -.626)(1.04, -0.0462)(.701, .765)
\psline[linecolor=lightgray](-.277, -.115)(.277, .115)
\end{pspicture}}
\qquad\quad
\subfigure[$n=8,\:\ell=4$]{\begin{pspicture}(-1,-1.1)(1,1) 
 \psline[linestyle=dotted,linewidth=1pt](.705, .705)(0., 1.)(-.705, .705)(-1., 0.)(-.705, -.705)(0., -1.)(.705, -.705)(1., 0.)(.705, .705)
\psline[showpoints=true](.634, .634)(0., 1.1)(-.634, .634)(-1.1, 0.)(-.634, -.634)(0., -1.1)(.634, -.634)(1.1, 0.)(.634, .634)
\psline[linecolor=lightgray](-0.3,0)(0.3,0)
\psline[linecolor=lightgray](0,-0.3)(0,0.3)
\psline[linecolor=lightgray](-0.212,0.212)(0.212,-0.212)
\psline[linecolor=lightgray](0.212,0.212)(-0.212,-0.212)
\end{pspicture}\label{fig:perturbations:8,4}}

\caption{(continued)
}
\label{fig:perturbations}
\end{center}
\end{figure}
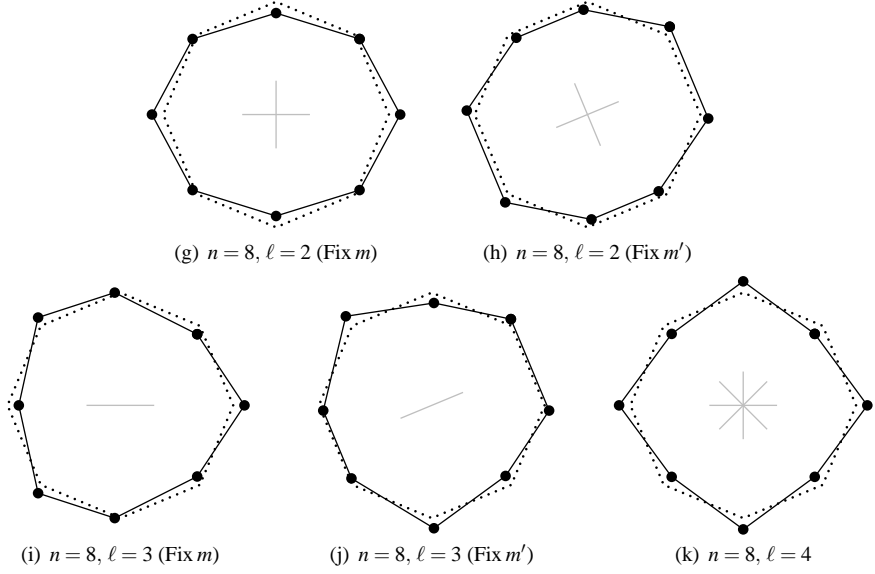

\subsection{Degenerate critical points} \label{sec:degenerate}

This section is dedicated to proving Theorem \ref{thm:bifurcations}. We begin by presenting a criterion for a degenerate critical point to be a local minimum, then in sections \ref{sec:n even} and \ref{sec:n odd} respectively apply the criterion to the cases where $n$ is even and $n$ is odd.

\begin{lemma} \label{lemma:probes}
Let $f$ be an analytic function defined on a neighbourhood of $0$ in $\RR^n$ with a degenerate critical point at $0$, such that $f(0)=0$.  Write $B=\dd^2\!f(0)$, $C=\dd^3\!\!f(0)$, $D=\dd^4\!\!f(0)$.
If $B$ is positive-semidefinite and if for all $\aa\in {\rm Ker}\, B \setminus \{ 0 \}$, $\bb\in\RR^n$ we have
$$
C\aa^3=0,\quad D\aa^4+6C\aa^2\bb + 3B\,\bb^2 >0,
$$
then $f$ is strictly positive on a punctured neighbourhood of\/ $0$. 
\end{lemma}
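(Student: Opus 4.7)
The strategy is a weighted-homogeneity (blow-up) scaling that treats the degenerate direction ${\rm Ker}\,B$ with weight $1$ and the transverse direction with weight $2$, matching the different orders at which each direction first contributes to $f$. Decompose $\RR^n = K \oplus K^\perp$ with $K = {\rm Ker}\,B$, and write $\mathbf{x} = \aa + \mathbf{c}$ with $\aa \in K$, $\mathbf{c} \in K^\perp$.  For each small $\mathbf{x} \neq 0$ set $t = (|\aa|^4 + |\mathbf{c}|^2)^{1/4} > 0$ and $\hat{\aa} = \aa/t$, $\hat{\mathbf{c}} = \mathbf{c}/t^2$, so that $(\hat{\aa}, \hat{\mathbf{c}})$ lies on the compact ``weighted sphere''
$$\Sigma \;=\; \bigl\{(\hat{\aa}, \hat{\mathbf{c}}) \in K \oplus K^\perp :\; |\hat{\aa}|^4 + |\hat{\mathbf{c}}|^2 = 1\bigr\},$$
and this gives a bijection between a punctured neighbourhood of $0$ and $(0,\delta)\times\Sigma$, with $t\to 0^+$ approaching the origin.

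Substituting $\mathbf{x} = t\hat{\aa} + t^2\hat{\mathbf{c}}$ into $f(\mathbf{x}) = \half B\mathbf{x}^2 + \tfrac{1}{6}C\mathbf{x}^3 + \tfrac{1}{24}D\mathbf{x}^4 + O(|\mathbf{x}|^5)$ and using $B\hat{\aa} = 0$ (so $B\mathbf{x}^2 = t^4 B\hat{\mathbf{c}}^2$) together with the hypothesis $C\hat{\aa}^3 = 0$ (so $C\mathbf{x}^3$ begins at $3t^4\,C\hat{\aa}^2\hat{\mathbf{c}}$) cancels every term of order below $t^4$, yielding
$$f(\mathbf{x}) \;=\; \frac{t^4}{24}\bigl[\,D\hat{\aa}^4 + 12\,C\hat{\aa}^2\hat{\mathbf{c}} + 12\,B\hat{\mathbf{c}}^2\,\bigr] \;+\; O(t^5).$$

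It remains to verify the bracket is strictly positive on $\Sigma$. If $\hat{\aa} \neq 0$, substituting $\bb = 2\hat{\mathbf{c}}$ turns the hypothesis inequality into $D\hat{\aa}^4 + 12\,C\hat{\aa}^2\hat{\mathbf{c}} + 12\,B\hat{\mathbf{c}}^2 > 0$, which is the bracket. If $\hat{\aa} = 0$, the constraint forces $|\hat{\mathbf{c}}| = 1$ with $\hat{\mathbf{c}} \in K^\perp$, on which $B$ is positive-definite, so the bracket reduces to $12\,B\hat{\mathbf{c}}^2 > 0$. By continuity and compactness of $\Sigma$ there exists $c_0 > 0$ with bracket $\geqslant 24 c_0$ throughout $\Sigma$; for small $t$ the uniform $O(t^5)$ remainder is dominated, giving $f(\mathbf{x}) \geqslant c_0\, t^4 > 0$ on a punctured neighbourhood of $0$. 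The main obstacle is spotting the correct anisotropic scaling: once the weights $(1,2)$ align all contributions at order $t^4$, the cubic obstruction vanishes thanks to $C\aa^3=0$, and the numerical coefficients $1,6,3$ in the hypothesis correspond (after the rescaling $\bb = 2\hat{\mathbf{c}}$) to the Taylor coefficients $\tfrac{1}{24},\,\tfrac{1}{2},\,\tfrac{1}{2}$ of the bracket, so the hypothesis translates verbatim into positivity of the leading homogeneous polynomial on $\Sigma$.
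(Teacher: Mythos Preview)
Your argument is correct, and it follows a genuinely different route from the paper's proof. The paper invokes the splitting lemma to put $f$ in the form $Q(x)+h(y)$ with $y\in{\rm Ker}\,B$, then argues by contradiction via the \emph{curve selection lemma}: if $f^{-1}(0)$ accumulated at the origin there would be an analytic arc $\gamma(t)$ in the zero set, and matching Taylor coefficients along $\gamma$ forces a contradiction with the hypothesis at order $t^{4r}$. Your proof instead performs an anisotropic blow-up with weights $(1,2)$ on $K\oplus K^\perp$, reducing the question to positivity of a single weighted-homogeneous polynomial on the compact weighted sphere $\Sigma$, which you then read off directly from the hypothesis (via $\bb=2\hat{\mathbf c}$) and the positive-definiteness of $B$ on $K^\perp$.

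Each approach has its merits. Your blow-up is more elementary---it avoids both the splitting lemma and the curve selection lemma---and yields a quantitative lower bound $f(\mathbf x)\geqslant c_0\,t^4$; moreover it only uses the Taylor expansion to order four with a uniform $O(t^5)$ remainder, so the analyticity assumption can be weakened to, say, $C^5$. The paper's argument, by contrast, leans essentially on analyticity through curve selection, but that machinery would adapt more readily to higher-order degeneracies where no single weight system collapses all leading contributions to one order.
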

The symbol like $C\aa^2\bb$ means `evaluate the trilinear form $C$ at $\aa$ in 2 of its 3 arguments and at $\bb$ in the 1 remaining argument'.
In our application of this lemma, $f$ will be the augmented Hamiltonian $\widehat{H}_{\lambda}$
(section \ref{sec:Hessians}), and for the
point-vortex problem this is analytic.

\begin{proof}
Suppose for a contradiction that $f^{-1}(0)$ intersects every punctured neighbourhood of $0$.
 
Use the splitting lemma (e.g.\ \cite{PS}) to write
$$f
(x,y) = Q(x) + h(y),
$$
where $Q$ is a homogeneous quadratic form and $h$ is a function with vanishing 2-jet. 
Explicitly, $Q(x) = \half x^TB\, x$ and $y\in {\rm Ker}\, B$. Since $y\in {\rm Ker}\, B$, the hypothesis $C\aa^3=0$ implies that $h$ in fact has vanishing 3-jet. Since the hypotheses of the lemma on the derivatives are \emph{intrinsic} (invariant under change of coordinates), in the new coordinates
$x, y$ they become $\dd^3 h=0$ and $\dd^4h\, \aa^4>0$ for all 
$\aa\in {\rm Ker}\, B \setminus \{ 0 \}$.

Use the curve selection lemma \cite{DLS,Hi73} to deduce the existence of an analytic curve $\gamma(t)$ passing through $0$ along which $f$ vanishes.
In the new coordinates, write $\gamma(t)= (x, y) = (\xi(t),\eta(t))$; then $Q(\xi(t))+h(\eta(t)) = 0$. Expand $\xi$, $\eta$ in Taylor series:
$$
\xi(t) = \xi_1t^r + \xi_2t^{r+1}+\cdots, \quad
\eta(t) = \eta_1t^r + \eta_2t^{r+1}+\cdots,
$$
where $r$ is the order of the curve (at least one of $\xi_1,\eta_1$ is nonzero). The leading terms of $f\circ (\xi, \eta)$, all of which must vanish, are
$$
Q(\xi(t)) = \half B\xi_1^2\, t^{2r} + B\xi_1\xi_2\, t^{2r+1}+\cdots
$$
By inspection this forces $\xi_1=\cdots=\xi_r=0$. The coefficient of $t^{4r}$ is then 
$B\xi_{r+1}^2 + \tfrac1{4!}\dd^4h\,\eta_1^4$, which must vanish.  But the order of the curve being $r$ and $\xi_1 = 0$, we must have $\eta_1\neq0$,
 hence from the hypothesis  
$B\xi_{r+1}^2 + \tfrac1{4!}\dd^4h\, \eta_1^2 > 0$,  a contradiction. \qed
\end{proof}

Consider now any Hamiltonian system on any symplectic manifold $M$, with Hamiltonian $H$, symmetry group $G$, momentum map $\J:M\to\gg^*$, and suppose $x\in M$ lies on a relative equilibrium with finite stabilizer (possibly trivial).  Define $T_0 := \gg\cdot x\,\cap\, {\rm Ker}\,\dd \J$.  The symplectic slice $\NN$ is then any $G_x$-invariant complement to $T_0$ in ${\rm Ker}\, \dd \J$.

\begin{proposition} \label{prop:degenerate stability}
If the hypotheses of Lemma \ref{lemma:probes} are satisfied with $\RR^n$ replaced by $\NN$ and $f$ by $\widehat{H}$, then the relative equilibrium is Lyapunov stable. 
\end{proposition}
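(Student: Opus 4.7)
The strategy is to reduce the proposition to the extremum criterion of \cite[Theorem 1.2]{Mo97}, which was already invoked as the finer stability criterion in section \ref{sec:symplectic slice}. That criterion asserts: a relative equilibrium is Lyapunov stable (in the $G_\mu$-sense) provided the augmented Hamiltonian $\widehat H = H - \omega J$ attains a local extremum at $x$ modulo the $G_\mu$-orbit through $x$. Thus it suffices to establish that $\widehat H$ has a strict local minimum at $x$ after restricting to a transversal to $G_\mu\cdot x$ in $M$.

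First I would apply Lemma \ref{lemma:probes} with $f$ equal to the restriction of $\widehat H$ to $\NN$. Viewing $\NN\subseteq T_xM$ as a linear chart (or equivalently identifying it with a local submanifold through $x$ via the exponential map of a $G_x$-invariant metric), this restriction is analytic because $H$ is analytic in $\J^{-1}(\mu)$ away from the collision diagonals, and $J$ is polynomial in the coordinates given by (\ref{eq:stereographic}). The derivatives $B = \dd^2\widehat H|_\NN$, $C = \dd^3\widehat H|_\NN$, $D = \dd^4\widehat H|_\NN$ satisfy the hypotheses of Lemma \ref{lemma:probes} by assumption, so the lemma yields $\widehat H|_\NN > 0$ on a punctured neighbourhood of $x$ in $\NN$.

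Second I would upgrade this slice-wise minimum to a local extremum of $\widehat H$ on a $G_\mu$-transversal. By construction $\NN$ is a $G_x$-invariant complement in $\ker \dd\J$ to the direction $T_0 = \gg\cdot x \cap \ker\dd\J$ tangent to the relative equilibrium. The equivariant (Marle--Guillemin--Sternberg) normal form provides a $G_\mu$-equivariant local model in which a full neighbourhood of $x$ in $M$ is parametrized by $G_\mu$-translates of a small slice whose tangent space at $x$ is $\NN \oplus (\text{a transversal to }\ker \dd\J)$. Because $\widehat H$ is $G_\mu$-invariant, its restriction to this slice inherits the strict minimum on $\NN$; the remaining coordinates transverse to $\ker \dd\J$ are controlled by conservation of $\J$ along the flow, which is exactly what Montaldi's theorem packages.

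With a strict local minimum of $\widehat H$ (modulo $G_\mu\cdot x$) in hand, Theorem 1.2 of \cite{Mo97} concludes $G_\mu$-Lyapunov stability of the relative equilibrium. The main obstacle, or at least the only nonformal step, is verifying that the minimum on $\NN$ really delivers the hypothesis of \cite[Theorem 1.2]{Mo97}; this is not a calculation but a check that Lemma \ref{lemma:probes} is applied in the correct intrinsic setting (so that the derivative conditions are coordinate-free on $\NN$), after which the proposition is obtained by quoting the theorem as a black box. \qed
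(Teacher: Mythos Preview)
Your proposal is correct and follows essentially the same line as the paper's proof: both transfer the hypotheses of Lemma~\ref{lemma:probes} from $\NN$ to the reduced Hamiltonian on $M_\mu$ and then invoke \cite[Theorem~1.2]{Mo97}. The only difference in packaging is that the paper works directly on the reduced space $M_\mu$ (using that the action is locally free so $\J$ is a submersion and $\NN\simeq T_{\bar x}M_\mu$), while you route through the Marle--Guillemin--Sternberg normal form; the latter is slightly heavier machinery than needed but leads to the same conclusion.
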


\begin{proof}
It is enough to show that in the reduced space $M_\mu$, $\mu=\J(x)$, the reduced Hamiltonian admits a local extremum at $x$, cf.\ \cite{Mo97}. Since the action is locally free, 
$\J$ is a submersion near $x$.  The tangent space of the submanifold $\J^{-1}(\mu)$ is $T_0 \oplus \NN$, and $\dd\J_\lambda(x)$ maps $\NN$ isomorphically to $T_{\bar x}M_\mu$, where 
$\bar x$ is the image of $x$ in $M_\mu$.
Let $\bar\gamma$ be a curve in $M_\mu$ through $\bar x$. Then $\bar\gamma$ lifts to a smooth curve in $M$ tangent to $\NN$ at $x$. The claim now follows because the hypotheses of Lemma \ref{lemma:probes} applied on $M_\mu$ are equivalent to the same hypotheses applied on $\NN$. \qed
\end{proof}

As the argument and calculations for the stability of the bifurcating points take distinct turns depending on the parity of $n$, we treat the even and odd cases separately. The even case is fairly easy, the odd case is much harder.

\subsubsection{$n$ even} \label{sec:n even}
The critical mode is $\ell = n/2$, we have $\zeta^{(n/2)} = \alpha^{(n/2)}$, and both 
$c$ and $m$ act by multiplication by $-1$. This means that on this Fourier mode 
$V_{n/2} = \left< \alpha^{(n/2)}_r, \alpha^{(n/2)}_\theta \right>$, the augmented Hamiltonian 
$\widehat{H}_{\lambda}$ is an even function. We therefore expect, for generic families of functions,  $\widehat{H}_{\lambda}$ restricted to $V_{n/2}$ to be equivalent to a family of the form
$$
f_u(x,y) = \pm x^4 +u x^2 + y^2,
$$
where $u$ is a parameter depending on $r_0$, $\lambda$.  The $+$ sign in front of the $y^2$ term is justified because the $y$-direction here corresponds to $\alpha^{(\ell)}_\theta$, whose eigenvalue from (\ref{eq:eigenvalues}) is $\kappa^2 n^2 / 16\pi > 0$.

The $\D_n$-invariance of $\widehat{H}_{\lambda}$ helps us to figure out which terms arise in its Taylor series.  
For example, if $f\in V_\ell^*$, $g\in V_m^*$, and $fg$ is invariant, then $m = \ell$.

\begin{lemma}\label{lemma:Ca2b n even}
  For all $\aa\in V_{n/2}$ and $\bb\in \NN$, we have $C\aa^2\bb=0$.
\end{lemma}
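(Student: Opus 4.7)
My plan is to exploit the $\D_n$-invariance of the augmented Hamiltonian, which forces the trilinear form $C=\dd^3\widehat{H}_\lambda$ at the regular ring to satisfy $C(g\bb_1,g\bb_2,g\bb_3)=C(\bb_1,\bb_2,\bb_3)$ for every $g\in\D_n$, and then to extract the statement from a short character argument based on the cyclic generator $c$.

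The key observation is that (\ref{eq:action on modes}) gives $c\cdot\zeta_r^{(n/2)} = -\zeta_r^{(n/2)}$ and $c\cdot\zeta_\theta^{(n/2)} = -\zeta_\theta^{(n/2)}$, so $c$ acts on $V_{n/2}$ as multiplication by $-1$.  Consequently, for any $\aa\in V_{n/2}$ we have $c\aa\otimes c\aa = \aa\otimes\aa$, and setting $\bb_1=\bb_2=\aa$ in the $c$-invariance identity for $C$ yields $C(\aa,\aa,c\bb)=C(\aa,\aa,\bb)$ for every $\bb\in\NN$.  Hence the linear functional $L(\bb):=C(\aa,\aa,\bb)$ on $\NN$ is invariant under the action of the cyclic subgroup $\langle c\rangle\subset\D_n$.

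To finish, I would show that this cyclic action has no nonzero fixed vector on the symplectic slice $\NN = V_1'\oplus V_2\oplus\cdots\oplus V_{\lfloor n/2\rfloor}$, forcing $L\equiv 0$.  On each 4-dimensional mode $V_\ell$ with $1\leqslant\ell<n/2$, formula (\ref{eq:action on modes}) shows $c$ acts through the pair of nontrivial eigenvalues $\ee^{\pm 2\pi\ii\ell/n}$; on $V_{n/2}$ it acts as $-1$; and a quick inspection of (\ref{eq:zeta'}) shows that the 2-dimensional subspace $V_1'\subset V_1$ is $c$-stable with $c$ acting as rotation by $2\pi/n$.  None of these actions admits a nonzero fixed vector, so $L$ vanishes identically and the lemma follows.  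The only mild obstacle I anticipate is the routine bookkeeping for $V_1'$, which is a one-line consequence of the complex form $\zeta' = \sigma\zeta_r^{(1)} + \ii\widetilde{\sigma}\zeta_\theta^{(1)}$ recorded in (\ref{eq:zeta'}); no genuine analytic difficulty is expected.
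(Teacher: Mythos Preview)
Your argument is correct and follows essentially the same representation-theoretic route as the paper: both proofs use the $\D_n$-invariance of $C$ together with the observation that the only Fourier mode on which $c$ acts trivially is $V_0$, which is absent from $\NN$. The paper phrases this via the tensor-product criterion $\ell_1\pm\ell_2\pm\ell_3\equiv 0\bmod n$ (forcing $\bb\in V_0$), whereas you phrase it as the $c$-invariance of the linear functional $L=C\aa^2(\,\cdot\,)$ combined with the absence of $c$-fixed vectors in $\NN$; these are the same idea in slightly different dress.
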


\begin{proof}
On the symplectic slice expand $\widehat{H}_{\lambda}$ in Taylor series.  Each term is invariant, in particular the 3rd-order term $C\mathbf{x}^3$ for $\mathbf{x}\in\NN$.  Given $\aa_i\in V_{\ell_i}$ ($i=1,2,3$), the quantity $C\aa_1\aa_2\aa_3$ lies in the tensor product $V_{\ell_1}\otimes V_{\ell_2}\otimes V_{\ell_3}$, which contains invariant functions if and only if $\ell_1\pm\ell_2\pm\ell_3\equiv0\bmod n$.  For $\aa_1=\aa_2=\aa\in V_{n/2}$, $C\aa^2\bb$ (necessarily invariant) can be nonzero only if $\bb\in V_0$.  But $V_0\cap\NN=\{0\}$, implying that if $\bb\in\NN$ and $C\aa^2\bb$ is invariant, then $\bb=0$, so that $C\aa^2\bb=0$. \qed
\end{proof}

It remains to calculate $D\aa^4$ for $\aa\in V_{n/2}$ in order to apply Proposition \ref{prop:degenerate stability}. 
The criterion for a bifurcation (\ref{eq:b_n}) reads, for even $n$,
\begin{equation}\label{eq:bifurcation for n even}
 \frac{1+\lambda^2r_0^4}{(1+\lambda r_0^2)^2} = \frac{n^2}{8(n-1)}.
\end{equation}

\medskip

\noindent   Put $z_j = (r_0+(-1)^j\, t)\ee^{2\pi \ii j/n}$ and $f(t) = \widehat{H}_{\lambda}(z_1,\dots,z_n)$. 
We shall expand
$$
f(t) = -\frac1{4\pi}\sum_{i<j}\log|z_i-z_j|^2 + \frac{n-1}{4\pi}\sum_j \log(1+\lambda|z_j|^2)
- \omega\sum_j\frac{|z_j|^2}{1+\lambda|z_j|^2}.
$$
to the 4th order in $t$.

\paragraph{Calculation}
At the bifurcation point, $r_0$ and $\lambda$ are related by (\ref{eq:bifurcation for n even}) and $\omega=\omega_{\, 0}$ is given by (\ref{eq:angular velocity}).  We are expanding
\begin{eqnarray*}
f(t) &=& -\frac{n}{8\pi}\>  \sum_{1 \leqslant k \leqslant n - 1, \; k \mathrm{~odd}}
\log\left(r_0^2+t^2 - (r_0^2-t^2)\cos(2\pi k/n)\right)\\
&&\quad- \frac{n(n-2)}{32\pi} \left( \log\left(r_0+t\right)^2+\log\left(r_0-t\right)^2\right) \\
&&\quad +\frac{n(n-1)}{8\pi}\left( \log(1+\lambda(r_0+t)^2) + \log(1+\lambda(r_0-t)^2)\right) \\
&&\qquad -\omega_{\, 0}\frac{n}2\left( \frac{(r_0+t)^2}{1+\lambda(r_0+t)^2} + \frac{(r_0-t)^2}{1+\lambda(r_0-t)^2}\right) \quad +\quad \cdots
\end{eqnarray*}
where $\cdots$ is a constant independent of $r_0$, $t$, $n$, which will henceforth be ignored.
Taking Taylor series in $t$ of all of these terms to order 4 is simple, except for the first line, which 
comes out as
$$
-\frac{n^2}{8\pi}\log r_0 - \frac{n^2(n-2)}{32\pi r_0^2}t^2 + \frac{n^2(n-2)(n^2+2n-12)}{768\pi r_0^4}t^4 + O(t^6)
$$
(up to an additive constant), thanks to identities akin to (\ref{eq:identity}).
The coefficient of $t^2$ in the Taylor series is then
$$
\frac{n}{32\pi r_0^2\sigma^2}\left( -(n-2)^2\sigma^2+4(n-1)(1-\lambda r_0^2)^2\right)
$$
which can be shown to vanish subject to the bifurcation relation (\ref{eq:bifurcation for n even}).  %
The coefficient of $t^4$ is
\begin{eqnarray*}
\frac{n}{768\pi r_0^4\sigma^4}&& \bigl[ (n-2)(n^3+2n^2-12n+24)\sigma^4 \\
&&\quad + 24(n-1)\lambda r_0^2(19\lambda^3r_0^6 - 54\lambda^2r_0^4+43\lambda r_0^2-4)\bigr].
\end{eqnarray*}
Denote by $T$ the term in square brackets. We wish to show that $T>0$ at the bifurcation point. 
Solving (\ref{eq:bifurcation for n even}) for $\lambda r_0^2$ yields 2 roots, substituting which into $T$ in turn yields 2 values, say $T_1,T_2$ (functions of $n$).   Write $m = n - 2$.  A calculation (using {\tt Maple}!) 
reveals that $T_1+T_2$ is equal to 
\begin{eqnarray*}
\frac{128(m+1)}{(m^2-4m-4)^4}&&\bigl( 384 +2560m+4m^9+4832m^2+5024m^3+10616m^4\\
&&\qquad +15888m^5+10778m^6+3266m^7+177m^8\bigr),
\end{eqnarray*}
while $T_1 T_2$ is equal to 
\begin{eqnarray*}
\frac{4096(m+1)^2}{(m^2-4m-4)^4} && \bigl(16m^{10} +648m^9+7409m^8+1044m^7+39960m^6+85512m^5\\
&&\qquad +57332m^4+25824m^3+15136m^2+5376m+576\bigr).
\end{eqnarray*}
In view of $m\geqslant 0$ and $n\geqslant 2$, both are manifestly strictly positive, so that each of $T_1$ and $T_2$ is indeed strictly positive. \hfill$\Box$

\begin{remark}\label{rk:supercritical}
Since the relative equilibrium is stable at the point of bifurcation, the resulting pitchfork bifurcations are \emph{supercritical\/}: the bifurcating relative equilibria are stable, and coexist with the unstable central one. Thus these stable bifurcating relative equilibria exist in a neighbourhood of the bifurcation point, $\lambda r_0^2$ satisfying (\ref{eq:stability criterion})
with the inequality reversed.  See also Figure \ref{fig:stability ranges}.

To persuade ourselves that this is a genuine pitchfork, we need to check the 
nondegeneracy condition which is that the eigenvalues of the Hessian move through $0$ at nonzero speed with respect to the parameter $\lambda r_0^2$ (or just $\lambda$ or $r_0$ separately).  The expression (\ref{eq:eigenvalues}) for the eigenvalues permits an easy check.

As the mode that bifurcates is $\ell=n/2$, the bifurcation occurs in the fixed-point space for the subgroup $\D_{n/2}$ as explained in section \ref{sec:geometry of bifurcating rings}. The bifurcating solutions have $\D_{n/2}$-symmetry, i.e.\ consist of 2 regular $n/2$-gons at slightly different radii from the common centre, and these bifurcating solutions are \emph{stable}, at least close to the bifurcation point.
\end{remark}

\subsubsection{$n$ odd}  \label{sec:n odd}
There are two reasons why $n$ odd is much harder than $n$ even. First, ${\rm Ker}\, B$ (degeneracy space) is 2-dimensional and its basis elements are less simple (namely $\zeta_r^{(n-1)/2}$ rather than $\zeta_r^{(n/2)}$).  Second, the 3rd derivative contributions are 
nonzero and no analogue of Lemma \ref{lemma:Ca2b n even} holds.
We proceed as far as we can with general odd $n$, and then specialize to numerical calculations for a few low values of $n$.

The criterion for a bifurcation (\ref{eq:b_n}) reads, for odd $n$,
\begin{equation}\label{eq:bifurcation for n odd}
\frac{1+\lambda^2r_0^4}{\left(1+\lambda r_0^2\right)^2} = \frac{n+1}8.
\end{equation}
The critical mode is $\ell=\half(n-1)$, and
$$
V_c := {\rm Ker}\, B = \left<\alpha^{((n-1)/2)}_r,\;\beta^{((n-1)/2)}_r\right>\subset V_{(n-1)/2}.
$$
We wish to apply Proposition \ref{prop:degenerate stability}, based on Lemma \ref{lemma:probes} with $\aa\in V_c$ and $\bb\in\NN$. The calculations are simplified by the following observations. Recall the definition of $V_1'$ from (\ref{eq: V1'}).

\begin{proposition}\label{prop: n even invariants}
\begin{enumerate}
\item No cubic invariant exists on $V_c\,$, consequently $C\aa^3=0$.
\item Up to scalar multiple, there exists a unique quartic invariant on $V_c\,$, 
consequently $D\aa^4$ is a multiple of $|\aa|^4$.
\item Up to scalar multiple, there exists a unique cubic invariant of the form $C\aa^2\bb$ with $\aa\in V_c$ and $\bb\in \NN$, and invariance forces $\bb\in V_1'$.
\end{enumerate}
\end{proposition}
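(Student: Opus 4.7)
My plan is to reduce (i)--(iii) to character counting on Fourier modes. Identify $V_c\simeq\CC$ via the complex coordinate $w\leftrightarrow\zeta_r^{((n-1)/2)}$ and $V_1'\simeq\CC$ via $v\leftrightarrow\zeta'$, so that by (\ref{eq:action on modes}) and (\ref{eq:zeta'}),
$$
c\cdot w=\ee^{-2\pi\ii\ell/n}w,\quad m\cdot w=\bar w,\quad c\cdot v=\ee^{-2\pi\ii/n}v,\quad m\cdot v=\bar v,
$$
with $\ell=(n-1)/2$. The key preliminary observation is that $\gcd(n,\ell)=1$: a common divisor $d$ divides both $n$ and $2\ell=n-1$, hence $d\mid 1$. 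Consequently $c$ has order $n$ on $V_c$, so $V_c$ is a faithful irreducible real $\D_n$-representation and its ring of real $\D_n$-invariants is the standard dihedral ring $\RR[|w|^2,\,\mathrm{Re}\,w^n]$, with generators in degrees $2$ and $n$.

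Parts (i) and (ii) then follow by an elementary character count: a monomial $w^p\bar w^q$ is $c$-invariant iff $n\mid p-q$. For $p+q=3$ we have $p-q\in\{\pm 1,\pm 3\}$ with $|p-q|<n$ (since $n$ is odd and $\geqslant 5$), so no cubic invariant exists and $C\aa^3\equiv 0$ on $V_c$; polarization then yields $C(\aa_1,\aa_2,\aa_3)=0$ for all triples. For $p+q=4$ the only viable difference is $p-q=0$, producing the single monomial $w^2\bar w^2=|w|^4$, which is automatically $m$-invariant; hence $D\aa^4$ is a scalar multiple of $|\aa|^4$.

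For (iii), decompose $\bb=\sum_{\ell'}\bb_{\ell'}$ along the Fourier modes of $\NN$. The $c$-invariance of $C\aa^2\bb_{\ell'}$ demands $(p-q)\ell+(r-s)\ell'\equiv 0\pmod n$ for every bidegree-$(2,1)$ monomial; using $2\ell\equiv -1\pmod n$ this forces $\ell'\equiv\pm 1\pmod n$ (or $\ell'\equiv 0$, excluded since $V_0\cap\NN=\{0\}$). In the range $0<\ell'\leqslant(n-1)/2<n-1$ the only survivor is $\ell'=1$, so $\bb\in V_1\cap\NN=V_1'$. Since $V_1'$ is complex $1$-dimensional with coordinate $v$, among the six bidegree-$(2,1)$ monomials in $w,\bar w,v,\bar v$ only $w^2v$ and $\bar w^2\bar v$ are $c$-invariant (using $2\ell+1=n$), and $m$ swaps them, leaving the unique real invariant $\mathrm{Re}(w^2v)$ up to scalar. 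The main obstacle is the preliminary coprimality step: without it $V_c$ could factor through a proper dihedral quotient $\D_k$ with $k<n$ and pick up spurious low-degree invariants that would break (i) and (ii); once the faithful $\D_n$-action on $V_c$ is secured, everything else is a short enumeration.
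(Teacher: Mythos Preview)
Your proof is correct and follows essentially the same approach as the paper: both use the coprimality of $(n-1)/2$ with $n$ to identify the $\D_n$-action on $V_c$ with the standard faithful planar one, then read off (i) and (ii) from the dihedral invariant ring $\RR[|w|^2,\mathrm{Re}\,w^n]$ with $n\geqslant 5$, and for (iii) use the Fourier/character decomposition to force $\bb\in V_1'$. Your treatment is slightly more explicit in the monomial enumeration and carries the uniqueness in (iii) all the way to $\mathrm{Re}(w^2v)$, whereas the paper states only $\bb\in V_1'$ in the proof and writes out the explicit invariant immediately afterward.
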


\begin{proof} Because $\half(n-1)$ is coprime to $n$, the action of $\D_n$ on $V_c$ 
is equivalent to the usual representation of $\D_n$ in the plane, though with an unusual choice of generator, cf.\ (\ref{eq:action on modes}).  $\D_n$-invariant functions on $V_c$ are functions of $N=x^2+y^2$ and $P=\mathrm{Re}(x+\ii y)^n$, cf.\ comment just before (\ref{eq:generic Dn function}).
Write $\aa=(x,y)$.

(i) As $n\geqslant 5$, this representation accommodates no cubic invariants, and as $C\aa^3$ must be invariant, it is $0$.

(ii) Likewise, the unique quartic invariant on $V_c$ is $N^2$, so $D\aa^4$ is a scalar multiple of $N^2=|\aa|^4$.

(iii) If $\aa\in V_c$ and $\bb\in V_m$, then 
$C\aa^2\bb\in V_{2\cdot (n-1)/2 +m} \oplus V_{2\cdot (n-1)/2 - m}\oplus V_{m}$. 
For this to be invariant, we need $m=0$ or $m=1$.  The former is ruled out by the assumption $\bb\in \NN$, so $\bb\in \NN\cap V_1 = V_1'$. \qed
\end{proof}

To understand better the invariant $C\aa^2\bb$, let $x,y$ be as before on $V_c$ and $u,v$ be coordinates on $V_1'$, chosen so that $m\cdot(x,y)=(x,-y)$ and
$m\cdot(u,v)=(u,-v)$; in a nutshell $m\cdot (z,w)=(\bar z,\bar w)$ in terms of complex variables $z=x+\ii y$ and $w=u+\ii v$.  We then have $c\cdot (z,w) = (c^{(n-1)/2}z,\, c w)$. The cubics of the form  $C\aa^2\bb$ are the real and imaginary parts of $z^2w$, $z^2\bar w$, $|z|^2w$.  However, only the first of these is invariant under $c$, and only its real part is invariant under $m$.  Thus,
$$
C\aa^2\bb = \gamma\, (z^2w+\bar z^2\bar w) = 2\gamma \left(u(x^2-y^2)-2vxy\right)
$$
for some value of $\gamma\in\RR$; explicitly $\gamma=\frac14 \frac{\partial^3}{\rule{0pt}{8pt}\partial x^2\partial u} (C\aa^2\bb) = \frac32 \frac{\partial^3}{\rule{0pt}{8pt}\partial x^2\partial u}
\widehat{H}_{\lambda}$.

The key quantity $D\aa^4+C\aa^2\bb+B\bb^2$ becomes, 
on completing the square, 
\begin{equation}\label{eq:completing the square}
\delta |z|^4+2 \gamma\, \mathrm{Re}(z^2w)+ \beta |w|^2 =
\delta \left|z^2+\frac{\gamma}{\delta}\bar w\right|^2 + 
\frac{\beta \delta -\gamma^{\, 2}} \delta |w|^2.
\end{equation}
Manifestly this is positive for all $z$, $w\neq0$ if and only if $\delta >0$ and 
$\beta \delta >\gamma^{\, 2}$.  The value of $\beta$ is
$$
\beta =\epsilon_1'= \frac{n(n-1)\kappa^2}{4\pi r_0^2}\widetilde{\sigma}^{\, 2}
$$
where as before $\widetilde{\sigma}=1-\lambda r_0^2$, cf.\ (\ref{eq:rho1'}). There remains the task of 
calculating $\gamma$ and $\delta$. 

\paragraph{Calculation}
The awkward trigonometric expressions prevented us (and {\tt Maple}) from reaching closed forms for $\gamma$ and $\delta$.  We therefore proceed to evaluate them numerically.  In all these evalutations, $r_0$ is related to the parameter $\lambda$ by (\ref{eq:bifurcation for n odd}).  For $n=7$, of course $\lambda=0$ and $r_0$ is arbitrary.

\paragraph{$n=5\,$\rm:}
$$
\beta = \frac{37.1}{r_0^2},\quad \gamma = -\frac{9.04}{r_0^3}, \quad \delta = \frac{15.8}{r_0^4}.
$$
$\beta\delta - \gamma^{\, 2} = 504.7 / r_0^6 >0$ hence the pentagon is stable.

\paragraph{$n=7\,$\rm:}
It transpires ({\tt Maple}) that for $n=7$
we have
$$
\beta =\frac{21}{2\pi r_0^2},\quad \gamma = \frac{63}{4\pi r_0^3}, \quad 
\delta = \frac{1071}{8\pi r_0^4}.
$$
Computationally these numbers are correct to a high degree of precision---but we have no proof that they are rational multiples of $1/\pi r_0^k$.  At any rate it is certain that 
$\beta\delta - \gamma^{\, 2} >0$, hence the heptagon is stable. This establishes Corollary \ref{coroll:Thomson}.

\paragraph{$n=9\,$\rm:} 
$$
\beta =\frac{6.9}{r_0^2},\quad \gamma = \frac{13.8}{r_0^3}, \quad \delta = \frac{182.4}{r_0^4}.
$$
$\beta\delta - \gamma^{\, 2} = 1075.60 / r_0^4 >0$ hence the enneagon\footnote{`Nonagon' mixes Latin and Greek.} is stable.

\paragraph{$n=11\,$\rm :} 
$$
\beta =\frac{12.0}{r_0^2},\quad \gamma = \frac{29.7}{r_0^3}, \quad \delta = \frac{555.1}{r_0^4}.
$$
$\beta\delta - \gamma^{\, 2} = 5787.6 / r_0^6 >0$ hence the hendecagon\footnote{`Undecagon' is another Greco-Latin hybrid.} is stable.

\begin{conjecture}\label{conj:n-gon stable}
For all values of $n$, the $n$-gon is stable at the bifurcation point.
\end{conjecture}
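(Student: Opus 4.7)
The plan is to obtain closed-form expressions for the numbers $\gamma$ and $\delta$ of section \ref{sec:n odd} as functions of $n$ at the bifurcation point, and then to verify $\beta\delta>\gamma^{\,2}$ uniformly in $n$ odd. The even case is already settled in section \ref{sec:n even}, so the conjecture reduces to $n$ odd.

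First I would set up the Taylor expansion of $\widehat H_\lambda$ restricted to $V_c\oplus V_1'$, along the lines of section \ref{sec:n even}. By Proposition \ref{prop: n even invariants} both $\gamma$ and $\delta$ are single scalars, and each reduces to a finite trigonometric sum of the form
$$
S_k(n)\ =\ \sum_{j=1}^{n-1}\frac{P_k\!\left(\cos\tfrac{2\pi j}{n},\cos\tfrac{\pi(n-1)j}{n}\right)}{\bigl(1-\cos(2\pi j/n)\bigr)^{k}},
$$
with $k\in\{2,3\}$ and $P_k$ a fixed polynomial produced from $\partial^3\!/\partial x^2\partial u$ and $\partial^4\!/\partial x^4$ applied to $\log|z_i-z_j|^2$ and to the background-vorticity terms. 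Using $\cos(\pi(n-1)j/n)=(-1)^{j+1}\cos(\pi j/n)$, each $S_k(n)$ can be rewritten purely in terms of $\cos(\pi j/n)$, so that the whole problem sits in a one-parameter family of cotangent sums.

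The crucial step is to extend the identity (\ref{eq:identity}) to these higher denominator powers. I would try two routes. Route (a): differentiate $\sum_{j=1}^{n-1}\cos(2\pi\ell j/n)/(1-\cos(2\pi j/n))$ with respect to a deformation parameter so as to pull down additional factors of $(1-\cos(2\pi j/n))$, thereby reducing each $S_k$ to a polynomial in $n$ (and in an auxiliary index) of explicit degree. Route (b): apply the contour-integral representation $\sum_{j=1}^{n-1}f(e^{2\pi\ii j/n})=-\mathrm{Res}_{z=1}\bigl(f(z)\,nz^{n-1}/(z^n-1)\bigr)-f(1)$ to the relevant rational $f$; the residue at $z=1$ is then a finite-order Taylor computation that returns a closed-form polynomial in $n$.

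Once $\gamma(n)$ and $\delta(n)$ are in hand, I would substitute the bifurcation relation (\ref{eq:bifurcation for n odd}) to eliminate $\lambda r_0^2$ and express $\beta\delta-\gamma^{\,2}$ as a rational function of $n$ alone, with one branch per root of (\ref{eq:bifurcation for n odd}). Following the template of the even case, I would factor out the manifestly positive denominator and try to display the numerator as a polynomial in $m=n-3$ with non-negative coefficients, or at worst as a sum of products of strictly positive terms for odd $n\geqslant 5$. The hard part is unquestionably the closed-form evaluation of the sums; everything downstream is mechanical once those are available. The clean rational form reported for $n=7$ and the numerical checks for $n=5,9,11$ are strong evidence that such closed forms exist and that positivity will be manifest. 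Should no closed form be locatable, a fallback is a uniform Cauchy--Schwarz-type upper bound on $\gamma^{\,2}$ against the expression for $\delta$, combined with the known value of $\beta=\frac{n(n-1)\kappa^2}{4\pi r_0^2}\widetilde\sigma^{\,2}$, strong enough to force $\beta\delta>\gamma^{\,2}$ for all odd $n$.
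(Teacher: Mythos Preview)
This statement is labeled a \emph{conjecture} in the paper and is \emph{not proved there}. Immediately after stating it the authors write: ``Above, we have proved this for all even $n$ and for $n = 3, 5, 7, 9, 11$.'' For general odd $n$ they explicitly report failure: ``The awkward trigonometric expressions prevented us (and {\tt Maple}) from reaching closed forms for $\gamma$ and $\delta$. We therefore proceed to evaluate them numerically.'' So there is no proof in the paper to compare your proposal against.

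What you have written is therefore a research plan rather than a proof, and its central step coincides with precisely the obstacle the authors could not overcome. Your Route~(a) (differentiating the identity (\ref{eq:identity})) and Route~(b) (residue calculus on $nz^{n-1}/(z^n-1)$) are both natural first attempts, and are almost certainly what the authors tried; the difficulty is that after substituting $\ell=\tfrac{n-1}{2}$ and expanding to fourth order, the resulting sums involve alternating signs $(-1)^j$ against powers of $(1-\cos(2\pi j/n))^{-1}$ (incidentally, $\cos(\pi(n-1)j/n)=(-1)^{j}\cos(\pi j/n)$, not $(-1)^{j+1}$), and such alternating cotangent-power sums do not in general reduce to polynomials in $n$ the way the non-alternating ones of (\ref{eq:identity}) do. The ``clean rational form reported for $n=7$'' that you cite as evidence is, per the paper, only a high-precision numerical observation (``we have no proof that they are rational multiples of $1/\pi r_0^k$''), so it does not support the existence of closed forms for general $n$. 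Your fallback Cauchy--Schwarz idea is too vague to assess: you would need to bound $\gamma^2$ above and $\delta$ below by quantities whose ratio is controlled by $\beta$, and no such pairing is indicated.

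In short: your outline correctly identifies what would need to be done, but the decisive step---closing the sums $S_k(n)$ in the odd case---is an open problem in the paper, and nothing in your proposal gives a concrete mechanism for accomplishing it.
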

Above, we have proved this for all even $n$ and for $n = 3, 5, 7, 9, 11$.

\subsection{Bifurcations from the equator}\label{sec:equator}

For all values of $n>3$, the $\ell=1$ mode `bifurcates' at $\lambda r_0^2=1$, i.e.\ when the ring of vortices lies on the equator of the sphere. The momentum value at such a ring is fixed by all of $\SO(3)$: indeed, for the usual coadjoint equivariant momentum map for this problem, the momentum value is $0$.  Let us summarize from \cite[Proposition 3.8]{LMR01} what bifurcations occur in this situation.  On each near-zero momentum sphere we get, besides the regular ring
of $n$ vortices, the following configurations. 
\begin{description}

\item[$n$ odd]  For each of the $n$ planes through the poles of the sphere and containing one of the vortices, 2 configurations consisting of $\half(n-1)$ pairs and that 1 vortex on the plane; 
the vortices in each pair are each other's reflection in that plane.
The notation in \cite{LMR01} is $C_h(\half(n-1) R,\,E)$,  $E$ referring to the single vortex on the plane, the $R$ to the reflection pairs.

\medskip

\item[$n$ even] In this case there are two distinct types of bifurcating solution, arising from the two distinct types of reflection in $D_n$:

\begin{enumerate} 

\item For each of the $\half n$ planes through the poles and containing a pair of diametrically 
opposite vortices, 1 configuration consisting of $\half n - 1$ reflection pairs and those 2 vortices on the plane.  The notation in \cite{LMR01} is $C_h((\half n -1)R,\,2E)$.

\item For each of the $\half n$ planes through the poles and passing midway between adjacent vortices, 1 configuration consisting of $\half n$ reflection pairs.  The notation in \cite{LMR01} is $C_h(\half n\, R)$.
\end{enumerate}
\end{description}

\section{What happens with other Hamiltonians}\label{sec:other Hamiltonians}

Toward the end of sections \ref{sec:surfaces} and  \ref{sec:vorticity}, we met three options for families of Green's functions, which all agree for the plane $\lambda=0$.  We have been opting for (\ref{eq:green's function from sphere}).  Here we sktech the conditions that guarantee the stability of the ring of identical
vortices for the other two options
(\ref{eq:green's function pole at infinity}), (\ref{eq:green's function opposite vortices}); the methods are 
the same as those of sections \ref{sec:vortices} and \ref{sec:bifurcations}.

\subsection{Green's function $G=\log|z-w|^2$}

\paragraph{Angular velocity} The regular ring of $n$ vortices with identical vorticities 
$\kappa$ rotates at angular velocity
$$\omega = -\frac{(n-1)\kappa}{8\pi}\frac{\sigma^2}{r_0^2},$$
Unlike the expression (\ref{eq:angular velocity}) this is not even in $\lambda$.

\paragraph{Stability}
The Hessian of the augmented Hamiltonian is the same as (\ref{eq:hessian}), except that $A$ is 
changed to
$$
A = - \frac{(n-1)\kappa^2}{24\pi r_0^2\sigma}\left( (n-11)+(n+13)\lambda r_0^2\right)
$$
and the eigenvalues of the Hessian become
$$
\epsilon_r^{(\ell)} = \frac{\kappa^2}{4\pi r_0^2}\left( (n-1)\frac{6+5\lambda^2 r_0^4}{3\sigma^2} - \ell(n-\ell)\right)
$$
while the expressions for $\epsilon_\theta^{(\ell)}$ are as before.  Also as before, the $\ell=\lfloor n/2 \rfloor$ mode has the least eigenvalue, so the ring is stable provided $\epsilon_r^{\lfloor n/2 \rfloor}>0$. The criterion is
$$
\frac{1+\frac{5}{6}\lambda^2r_0^4}{(1+\lambda r_0^2)^2} > \frac1{2(n-1)}\Big\lfloor \frac{n^2}4\Big\rfloor.
$$
Compared with (\ref{eq:stability criterion}), for each $n$ this new inequality is satisfied by a slightly narrower range of the effective parameter $\lambda r_0^2$. The transition from stable to unstable still occurs at $\lambda r_0^2$ of the same sign as for the previous Hamiltonian, and for $\lambda=0$ the two Hamiltonians agree.   Hence the Thomson heptagon is still stable.

\paragraph{Bifurcations} It seems likely that the bifurcations are of the same types as those explained in section \ref{sec:bifurcations}; we have checked this for $n=5,7,9$.

\subsection{Green's function $G=\log \frac{|z-w|^2}{|1+\lambda z\bar w|^2}$}

On the hyperbolic plane $\lambda < 0$ this reduces to the Hamiltonian adopted by Kimura \cite{Kimura}.  
For $\lambda = 0$ it is the standard Green's function on the plane, while for $\lambda > 0$ it corresponds to Green's function for the Laplacian on the sphere with `counter-vortices'.  The Hamiltonian will model $2n$ vortices placed pairwise at antipodal points, each pair having opposite vorticities. Thus the `ring' becomes 2 rings, one of $n$ vortices of vorticity $\kappa$ near the North Pole, the other of $n$ vortices of vorticity $-\kappa$ near the South Pole. In Laurent-Polz \cite{LP02} these configurations are referred to as $\D_{nh}(2R)$ when $n$ is even and $\D_{nd}(R,R')$ when $n$ is odd (in the former the rings are aligned, while in the latter they are staggered). The stability results of \cite{LP02} are not directly applicable here, as he considers stability with respect to  perturbations of all $2n$ vortices, whereas we are considering a restricted class of 
perturbations: those preserving the antipodal pairing of the configurations. If a configuration is stable for Laurent-Polz, then {\it a fortiori\/} it will be stable for our setting.

The calculations based on this option of Green's function get so cumbersome that the stability problem seems no longer tractable analytically.  It seems likely that the results are similar to those in section \ref{sec:bifurcations}, though the details of where the bifurcations occur will differ.  We did calculate that the angular velocity $\omega$ analogous to (\ref{eq:angular velocity}) of the ring is
$$
 -\frac{\kappa}{8\pi r_0^2}
\frac{1+\lambda r^2}{1-\left(-\lambda r^2\right)^n}\left((n-1)(1+\lambda r^2)\left(1+\left(-\lambda r^2\right)^{n-1}\right)+2\lambda r^2\left(1-\left(-\lambda r^2\right)^{n-1}\right)\right).
$$
For $\lambda>0$ (spheres) this can be deduced from \cite[Proposition 3.11]{LP02}.  

\paragraph{Acknowledgements}
JM thanks Tudor Ratiu and the staff of the Bernoulli Centre in Lausanne for their hospitality, as much of this paper was written during an extended visit there.  TT thanks L.~Mahadevan and the staff of SEAS at Harvard for their hospitality, as much of this paper was finished during an extended visit there.


\end{document}